\newtheorem{thm}{Theorem}[section]
\newtheorem{prop}[thm]{Proposition}
\newtheorem{lem}[thm]{Lemma}
\newtheorem{cor}[thm]{Corollary}
\theoremstyle{remark}
\newtheorem{rem}[thm]{Remark}
\newcommand{\ZZ}{\mathbb{Z}}
\newcommand{\allone}{\mathbf{1}}
\DeclareMathOperator{\supp}{supp}
\DeclareMathOperator{\wt}{wt}
\begin{document}

\title{On the Residue Codes of
Extremal Type~II $\ZZ_4$-Codes of Lengths $32$ and $40$}

\author{Masaaki Harada\thanks{Department of Mathematical Sciences,
Yamagata University, Yamagata 990--8560, Japan, and
PRESTO, Japan Science and Technology Agency, Kawaguchi,
Saitama 332--0012, Japan. email: mharada@sci.kj.yamagata-u.ac.jp}}

\maketitle

\begin{abstract}
In this paper, we determine the dimensions of
the residue codes of extremal Type~II $\ZZ_4$-codes
for lengths $32$ and $40$.
We demonstrate that 
every binary doubly even self-dual code
of length $32$ can be realized as the residue code of some 
extremal Type~II $\ZZ_4$-code.
It is also shown that
there is a unique extremal Type~II $\ZZ_4$-code of
length $32$ whose residue code has the smallest 
dimension $6$ up to equivalence.
As a consequence, many new extremal Type~II $\ZZ_4$-codes 
of lengths $32$ and $40$ are constructed.
\end{abstract}

\bigskip
\noindent
{\bf Keywords:} extremal Type~II $\ZZ_4$-code, 
residue code, binary doubly even code

\bigskip
\noindent
{\bf AMS Subject Classification:}
94B05

\section{Introduction}\label{Sec:1}

As described in~\cite{RS-Handbook},
self-dual codes are an important class of linear codes for both
theoretical and practical reasons.
It is a fundamental problem to classify self-dual codes
of modest length, 
and construct self-dual codes with the largest minimum weight
among self-dual codes of that length.
Among self-dual $\ZZ_k$-codes, self-dual $\ZZ_4$-codes
have been widely studied because such codes
have applications to unimodular lattices and 
nonlinear binary codes,
where $\ZZ_k$ denotes the ring of integers modulo $k$
and $k$ is a positive integer.

A $\ZZ_4$-code $C$ is {Type~II}
if $C$ is self-dual and 
the Euclidean weights of all codewords of $C$ are divisible by 
8~\cite{Z4-BSBM,Z4-HSG}.
This is a remarkable class of self-dual $\ZZ_4$-codes
related to even unimodular lattices.
A Type~II $\ZZ_4$-code of length $n$ exists if and 
only if $n \equiv 0 \pmod 8$,
and the minimum Euclidean 
weight $d_E$ of a Type~II $\ZZ_4$-code  
of length $n$ is bounded by
$d_E \le 8 \lfloor n/24 \rfloor +8$~\cite{Z4-BSBM}.
A Type~II $\ZZ_4$-code meeting this bound with equality is called 
{extremal}.
If $C$ is a Type~II $\ZZ_4$-code, then 
the residue code $ C^{(1)}$ is a binary 
doubly even code containing the all-ones vector 
$\allone$~\cite{Z4-CS, Z4-HSG}.

It follows from the mass formula in~\cite{G} that
for a given binary doubly even code $B$ containing $\allone$
there is a Type~II $\ZZ_4$-code $C$ with $C^{(1)}=B$.
However, it is not known in general whether there is an extremal
Type~II $\ZZ_4$-code $C$ with $C^{(1)}=B$ or not.
Recently, at length $24$, 
binary doubly even codes which are
the residue codes of extremal Type~II $\ZZ_4$-codes
have been classified in~\cite{HLM}.
In particular, 
there is an extremal Type~II $\ZZ_4$-code
whose residue code has dimension $k$
if and only if $k \in \{6,7,\ldots,12\}$~\cite[Table 1]{HLM}.
It is shown that
there is a unique extremal Type~II $\ZZ_4$-code
with residue code of dimension $6$ up to equivalence~\cite{HLM}.
Also, every binary doubly even self-dual code
of length $24$ can be realized as the residue code of some 
extremal Type~II $\ZZ_4$-code~\cite[Postscript]{CS97} 
(see also~\cite{HLM}).
Since extremal Type~II $\ZZ_4$-codes of length $24$ and their
residue codes are related to the Leech lattice~\cite{Z4-BSBM,CS97}
and structure codes of the moonshine vertex operator 
algebra~\cite{HLM}, respectively, 
this length is of special interest.
For the next two lengths $n=32$ and $40$, 
a number of extremal Type~II $\ZZ_4$-codes are known 
(see~\cite{Huffman05}).
However, 
only a few extremal Type~II $\ZZ_4$-codes which have
residue codes of dimension less than $n/2$
are known for these lengths $n$.
This motivates us to study the dimensions of
the residue codes of extremal Type~II $\ZZ_4$-codes for
these lengths.

In this paper, it is shown that
there is an extremal Type~II $\ZZ_4$-code of length $32$
whose residue code has dimension $k$
if and only if $k \in \{6,7,\ldots,16\}$.
In particular, we study two cases $k=6$ and $16$.
We demonstrate that 
every binary doubly even self-dual code
of length $32$ can be realized as the residue code of some 
extremal Type~II $\ZZ_4$-code.
It is also shown that
there is a unique extremal Type~II $\ZZ_4$-code of
length $32$ with residue code of dimension $6$ up to equivalence.
Finally, it is shown that
there is an extremal Type~II $\ZZ_4$-code of length $40$
whose residue code has dimension $k$
if and only if $k \in \{7,8,\ldots,20\}$.
As a consequence, many new extremal Type~II $\ZZ_4$-codes 
of lengths $32$ and $40$ are constructed.
Extremal Type~II $\ZZ_4$-codes of lengths $32$ and $40$
are used to
construct extremal even unimodular lattices 
by Construction A (see~\cite{Z4-BSBM}).
All computer calculations in this paper
were done by {\sc Magma}~\cite{Magma}.


 \section{Preliminaries}\label{Sec:2}

\subsection{Extremal Type~II $\ZZ_4$-codes}

Let $\ZZ_4\ (=\{0,1,2,3\})$ denote the ring of integers
modulo $4$.
A {\em $\ZZ_{4}$-code} $C$ of length $n$ 
is a $\ZZ_{4}$-submodule of $\ZZ_{4}^n$.
Two $\ZZ_4$-codes are {\em equivalent} if one can be obtained from the
other by permuting the coordinates and (if necessary) changing
the signs of certain coordinates.
The {\em dual} code $C^\perp$ of $C$ is defined as
$C^\perp = \{ x \in \ZZ_{4}^n\ | \ x \cdot y = 0$ for all $y \in C\}$,
where $x \cdot y = x_1 y_1 + \cdots + x_n y_n$ for
$x=(x_1,\ldots,x_n)$ and $y=(y_1,\ldots,y_n)$.
A code $C$ is {\em self-dual} if $C=C^\perp.$

The {\em Euclidean weight} of a codeword $x=(x_1,\ldots,x_n)$ of $C$ is
$n_1(x)+4n_2(x)+n_3(x)$, where $n_{\alpha}(x)$ denotes
the number of components $i$ with $x_i=\alpha$ $(\alpha=1,2,3)$.
The {\em minimum Euclidean weight} $d_E$ of $C$ is the smallest Euclidean
weight among all nonzero codewords of $C$.
A $\ZZ_4$-code $C$ is {\em Type~II}
if $C$ is self-dual and 
the Euclidean weights of all codewords of $C$ are divisible 
by 8~\cite{Z4-BSBM,Z4-HSG}.
A Type~II $\ZZ_4$-code of length $n$ exists if and 
only if $n \equiv 0 \pmod 8$, and 
the minimum Euclidean 
weight $d_E$ of a Type~II $\ZZ_4$-code  
of length $n$ is bounded by
$d_E \le 8 \lfloor n/24 \rfloor +8$~\cite{Z4-BSBM}.
A Type~II $\ZZ_4$-code meeting this bound with equality is called 
{\em extremal}.

The classification of 
Type~II $\ZZ_4$-codes has been done for lengths $8$
and $16$~\cite{Z4-CS,Z4-PLF}.
At lengths $24,32$ and $40$, a number of extremal
Type~II $\ZZ_4$-codes are known (see~\cite{Huffman05}).
At length $48$, only two inequivalent extremal Type~II
$\ZZ_4$-codes are known~\cite{Z4-BSBM,Venkov}.
At lengths $56$ and $64$, recently, 
an extremal Type~II $\ZZ_4$-code has been constructed in~\cite{H10}.

\subsection{Binary doubly even self-dual codes}
Throughout this paper,
we denote by $\dim(B)$ the dimension of a binary code $B$.
Also, for a binary code $B$ and a binary vector $v$, 
we denote by $\langle B,v\rangle$ 
the binary code generated by the
codewords of $B$ and $v$.
A binary code $B$ is called doubly even if 
$\wt(x) \equiv 0 \pmod 4$ for any codeword $x \in B$,
where $\wt(x)$ denotes the weight of $x$.
A binary doubly even self-dual code of length $n$ exists if and 
only if $n \equiv 0 \pmod 8$, and 
the minimum weight 
$d$ of a binary doubly even self-dual code of length $n$
is bounded by $d \le 4 \lfloor n/24 \rfloor +4$ 
(see~\cite{Huffman05,RS-Handbook}).
A binary doubly even self-dual code
meeting this bound with equality is called  extremal.

Two binary codes $B$ and $B'$ are equivalent, denoted $B \cong B'$,
if $B$ can be obtained from $B'$ by permuting the coordinates.
The classification of binary doubly even self-dual codes
has been done for lengths up to $32$ 
(see~\cite{CPS,Huffman05,RS-Handbook}).
There are $85$ inequivalent binary
doubly even self-dual codes of length $32$,
five of which are extremal~\cite{CPS}.

\subsection{Residue codes of $\ZZ_4$-codes}
Every $\ZZ_4$-code $C$ of length $n$ has two binary codes 
$C^{(1)}$ and $C^{(2)}$ associated with $C$:
\[
C^{(1)}= \{ c \bmod 2 \mid  c \in C \} \text{  and }
C^{(2)}= \left\{ c \bmod 2 \mid c \in \ZZ_4^n, 2c\in C \right\}.
\]
The binary codes $C^{(1)}$ and $C^{(2)}$ are called the 
{\em residue} and {\em torsion} codes of $C$, respectively.
If $C$ is self-dual, then $ C^{(1)}$ is a binary 
doubly even code with $C^{(2)} = {C^{(1)}}^{\perp}$~\cite{Z4-CS}.
If $C$ is Type~II, then $C^{(1)}$ contains
the all-ones vector $\allone$~\cite{Z4-HSG}.

The following two lemmas can be easily shown
(see~\cite{HLM} for length $24$).

\begin{lem}
Let $B$ be the residue code of an extremal Type~II $\ZZ_4$-code
of length $n \in \{24,32,40\}$.
Then $B$ satisfies the following conditions:
\begin{align}
&B\text{ is doubly even;}\label{eq:b1}\\
&B\ni\allone;\label{eq:b2}\\
&B^\perp\text{ has minimum weight at least $4$}.\label{eq:b3}
\end{align}
\end{lem}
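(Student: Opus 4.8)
The plan is to treat the three conditions separately, observing that the first two are essentially restatements of facts recorded immediately above the lemma. Write $B=C^{(1)}$ for the residue code of an extremal Type~II $\ZZ_4$-code $C$ of length $n\in\{24,32,40\}$. Condition~\eqref{eq:b1} is immediate: since $C$ is self-dual, its residue code $C^{(1)}$ is a binary doubly even code. Condition~\eqref{eq:b2} is equally immediate: since $C$ is Type~II, $C^{(1)}$ contains $\allone$. So the only condition requiring genuine work is~\eqref{eq:b3}, and it is here that extremality will be used.

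For~\eqref{eq:b3} I would first rewrite the dual using self-duality. Because $C^{(2)}={C^{(1)}}^{\perp}=B^{\perp}$, it suffices to bound the minimum weight of the torsion code $C^{(2)}$ from below by $4$. The key step is to relate torsion codewords to honest codewords of $C$: if $v\in C^{(2)}$ is nonzero and $\tilde v\in\{0,1\}^n$ is a binary lift of $v$, then by the definition of $C^{(2)}$ one has $2\tilde v\in C$ (any two lifts of $v$ differ by an even vector, which is annihilated by multiplication by $2$ in $\ZZ_4$, so the choice of lift is immaterial). This codeword is nonzero since $v\neq 0$, and its nonzero coordinates all equal $2$; hence its Euclidean weight is exactly $4\,\wt(v)$.

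Now I would invoke extremality. For each of the three lengths one has $\lfloor n/24\rfloor=1$, so the extremal minimum Euclidean weight is
\[
d_E = 8\lfloor n/24\rfloor + 8 = 16
\]
in every case, and this common value is precisely what makes the three lengths behave alike. Combining this with the preceding paragraph, every nonzero $v\in C^{(2)}$ satisfies $4\,\wt(v)\ge d_E=16$, whence $\wt(v)\ge 4$. Since $B^{\perp}=C^{(2)}$, this is exactly~\eqref{eq:b3}.

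I do not expect a real obstacle here: once the dictionary $B^{\perp}=C^{(2)}$ and the Euclidean-weight identity for codewords of the form $2\tilde v$ are in place, the conclusion is forced. The only points demanding any care are verifying that the lift $2\tilde v$ genuinely lies in $C$ independently of the chosen lift, and that each ``$2$'' contributes weight $4$ to the Euclidean weight. The restriction to $n\in\{24,32,40\}$ enters solely through the numerical coincidence $d_E=16$; the same reasoning applied to longer extremal lengths would yield only a weaker lower bound on the minimum weight of $B^{\perp}$.
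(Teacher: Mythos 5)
Your proposal is correct and follows essentially the same route as the paper: conditions (\ref{eq:b1}) and (\ref{eq:b2}) are quoted from the standard facts stated in the preliminaries, and (\ref{eq:b3}) is obtained by identifying $B^\perp$ with the torsion code $C^{(2)}$ and bounding its minimum weight from below via extremality. The only difference is that the paper cites~\cite{H10} for the bound (minimum weight of $C^{(2)}$ at least $2\lfloor n/24\rfloor+2$, which equals $4$ for these lengths), whereas you supply its proof directly through the lifting argument $v \mapsto 2\tilde v$ with Euclidean weight $4\wt(v)$ --- a self-contained version of the same step.
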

\begin{proof}
The assertions (\ref{eq:b1}) and (\ref{eq:b2}) follow 
from~\cite{Z4-CS} and~\cite{Z4-HSG}, respectively, as described above.
If $C$ is an extremal Type~II $\ZZ_4$-code of 
length $n$, then ${C^{(2)}}$ has minimum weight 
at least $2\lfloor n/24 \rfloor+2$ (see~\cite{H10}).
The assertion (\ref{eq:b3}) follows.
\end{proof}

\begin{lem}\label{Lem:bound}
Let $B$ be the residue code of an extremal Type~II $\ZZ_4$-code
of length $n$.
Then, 
$6 \le \dim(B) \le 16  \text{ if } n=32$,
and 
$7 \le \dim(B) \le 20  \text{ if } n=40$. 
\end{lem}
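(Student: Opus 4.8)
The plan is to treat the two inequalities separately, with the lower bounds being the substantive part; the preceding lemma supplies everything needed about $B$ and $B^\perp$.

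For the upper bound I would use that $B$ is doubly even, hence self-orthogonal. Indeed, for codewords $x,y\in B$ one has $\wt(x+y)=\wt(x)+\wt(y)-2|\supp(x)\cap\supp(y)|$, and since all three weights are divisible by $4$, the intersection size is even, so $x\cdot y=0$. Thus $B\subseteq B^\perp$, giving $\dim(B)\le \dim(B^\perp)=n-\dim(B)$ and hence $\dim(B)\le n/2$. This yields $\dim(B)\le 16$ for $n=32$ and $\dim(B)\le 20$ for $n=40$.

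For the lower bound I would instead bound $\dim(B^\perp)$ from above by a sphere-packing argument, combining $\allone\in B$ with the minimum-weight condition on $B^\perp$. First, since $\allone\in B$, every vector of $B^\perp$ is orthogonal to $\allone$, so $B^\perp$ is an even code. Second, $B^\perp=C^{(2)}$ has minimum weight at least $4$ by the preceding lemma. Now consider the Hamming balls of radius $1$ centred at the codewords of $B^\perp$; because the minimum distance is at least $4>2$, these balls are pairwise disjoint. Each centre has even weight, so the $n$ vectors at Hamming distance exactly $1$ from it all have odd weight, and these $n$ odd-weight vectors are distinct across different balls. Since $\FF_2^n$ contains only $2^{n-1}$ odd-weight vectors, we obtain $n\cdot 2^{\dim(B^\perp)}\le 2^{n-1}$, that is, $\dim(B^\perp)\le n-1-\log_2 n$.

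Combining this with $\dim(B)=n-\dim(B^\perp)$ gives $\dim(B)\ge 1+\log_2 n$. For $n=32$ this reads exactly $\dim(B)\ge 6$, and for $n=40$ it gives $\dim(B)\ge 1+\log_2 40=6.32\ldots$, which forces $\dim(B)\ge 7$ as the dimension is an integer. The one point requiring care is precisely the length-$40$ lower bound: the cruder count that ignores parity (using only minimum distance $3$, hence balls consuming all of $\FF_2^n$) yields only $\dim(B)\ge 6$. The argument must therefore exploit that $B^\perp$ is even, so that the radius-$1$ balls meet only odd-weight vectors and the count tightens to $2^{n-1}$ rather than $2^{n}$; this refinement is exactly what promotes the bound from $6$ to $7$ at length $40$.
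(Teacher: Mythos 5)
Your proof is correct, and while your upper bound coincides with the paper's, your lower bound takes a genuinely different route. For the upper bound the paper likewise uses that a binary doubly even code is self-orthogonal, giving $\dim(B)\le n/2$ (you additionally spell out the weight identity $\wt(x+y)=\wt(x)+\wt(y)-2|\supp(x)\cap\supp(y)|$ behind this fact, which the paper treats as known). For the lower bound the paper does no counting at all: it cites tabulated results from Brouwer's handbook that a $[32,k,4]$ code exists only if $k\le 26$ and a $[40,k,4]$ code exists only if $k\le 33$, and applies them to $B^\perp$, which has minimum weight at least $4$ by condition (\ref{eq:b3}). You instead derive these dimension bounds from scratch: since $\allone\in B$ by (\ref{eq:b2}), the dual $B^\perp$ is even; its radius-$1$ Hamming balls are pairwise disjoint because its minimum distance is at least $4$; and each ball contributes $n$ distinct odd-weight vectors, so $n\cdot 2^{\dim(B^\perp)}\le 2^{n-1}$, giving $\dim(B^\perp)\le 26$ for $n=32$ and $\dim(B^\perp)\le 33$ for $n=40$, hence $\dim(B)\ge 6$ and $\dim(B)\ge 7$ respectively. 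Your arithmetic is right, and your remark that the unrefined sphere-packing count would only yield $\dim(B)\ge 6$ at length $40$ correctly identifies where the parity refinement is essential. The trade-off: the paper's proof is shorter and uses only (\ref{eq:b3}), since the handbook bounds hold for arbitrary (not necessarily even) codes, whereas your argument is self-contained and elementary but additionally needs (\ref{eq:b2}) to force $B^\perp$ even. In fact your inequality $n\cdot 2^{k}\le 2^{n-1}$ is exactly the Hamming bound for the code punctured at one coordinate, which is one standard way the cited handbook bounds are themselves obtained, so the two routes arrive at the same numerical bound from different hypotheses.
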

\begin{proof}
Since a binary doubly even code is self-orthogonal,
$\dim(B) \le n/2$.
From (\ref{eq:b3}), $B^\perp$ has minimum weight 
at least $4$.
It is known that a $[32,k,4]$ code exists only if $k \le 26$
and a $[40,k,4]$ code exists only if $k \le 33$
(see~\cite{Brouwer-Handbook}). 
The result follows.
\end{proof}

In this paper, we consider the existence of an extremal Type~II 
$\ZZ_4$-code with residue code of dimension $k$
for a given $k$.
To do this, the following lemma is useful, and it was shown 
in~\cite{HLM} for length $24$.
Since its modification to lengths $32$ and $40$ is straightforward,
we omit to give a proof.

\begin{lem}\label{Lem:wt4}
Let $C$ be an extremal Type~II $\ZZ_4$-code of length 
$n \in \{24, 32,40\}$.
Let $v$ be a binary
vector of length $n$ and weight $4$ such that
$v \not\in C^{(1)}$ and 
the code $\langle C^{(1)},v\rangle$ is doubly even.
Then there is an extremal Type~II $\ZZ_4$-code
$C'$ such that $C'^{(1)}=\langle C^{(1)},v\rangle$.
\end{lem}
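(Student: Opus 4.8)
The plan is to realise $C'$ as a small, controlled modification of the given extremal code $C$, so that extremality is inherited rather than re-established from scratch.

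First I would extract the structural content of the hypotheses. Since $\langle C^{(1)},v\rangle$ is doubly even it is self-orthogonal, so $v\cdot x=0$ for all $x\in C^{(1)}$; hence $v\in (C^{(1)})^\perp=C^{(2)}$, and together with $v\notin C^{(1)}$ this gives $v\in C^{(2)}\setminus C^{(1)}$. Let $\tilde v\in\ZZ_4^{\,n}$ be the lift of $v$ with entries in $\{0,1\}$ and put $T=\supp(v)$, so $|T|=4$ and, because $v\in C^{(2)}$, the word $2\tilde v$ lies in $C$. I would then introduce the index-two subcode $C_0=\{c\in C:\tilde v\cdot c\equiv 0\pmod 4\}$. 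The map $c\mapsto\tfrac12(\tilde v\cdot c)\bmod 2$ is well defined (each $\tilde v\cdot c$ is even, as $v\perp C^{(1)}$) and $\FF_2$-linear, and it is nonzero, since otherwise $\tilde v\in C^{\perp}=C$ and $v\in C^{(1)}$; thus $[C:C_0]=2$. A point I would check carefully is that $C_0$ still surjects onto $C^{(1)}$ under reduction modulo $2$: this functional is nonzero on the reduction kernel $C\cap 2\ZZ_4^{\,n}$ precisely because $v\notin (C^{(2)})^\perp=C^{(1)}$, so $C_0+(C\cap 2\ZZ_4^{\,n})=C$ and hence $\overline{C_0}=C^{(1)}$.

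Next I would build a one-parameter family of self-dual codes with the correct residue. For $u\in\ZZ_4^{\,n}$ with $u\bmod 2\in C^{(2)}$, set $y=\tilde v+2u$ and $C'_u=\langle C_0,y\rangle$. One checks $y\cdot y\equiv 0\pmod 4$ and $y\cdot c\equiv 0\pmod 4$ for all $c\in C_0$ (using $\tilde v\cdot c\equiv 0\pmod 4$ on $C_0$ and $(u\bmod 2)\in (C^{(1)})^\perp$), so $C'_u$ is self-orthogonal; since $2\tilde v\in C_0$ one has $|C'_u|=2|C_0|=4^{n/2}$, whence $C'_u$ is self-dual, and $\overline{C'_u}=C^{(1)}\cup(v+C^{(1)})=\langle C^{(1)},v\rangle$. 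Thus every member of the family already has the required residue code, and only Type~II and extremality remain. Type~II I would secure by choosing $u$ so that the parity condition relating $n_2$ to the residue weight holds on all codewords; that a Type~II code with the prescribed doubly even residue $\langle C^{(1)},v\rangle$ exists at all is guaranteed by the mass formula cited in the introduction, and arranging it within the family is the easy, routine half of the argument.

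The hard part will be extremality. Once $C'_u$ is Type~II its nonzero Euclidean weights are positive multiples of $8$, so extremality fails only if some codeword has Euclidean weight exactly $8$. Every new codeword $c'\in y+C_0$ reduces into the nontrivial coset $v+C^{(1)}$, whose nonzero weights are multiples of $4$, and its Euclidean weight equals $\wt(\overline{c'})+4\,n_2(c')$; hence weight $8$ can occur only when $\wt(\overline{c'})=4$ with $n_2(c')=1$, or $\wt(\overline{c'})=8$ with $n_2(c')=0$. The crux is to choose the lift $u$ so that neither shape occurs (the codewords of $C_0\subseteq C$ are harmless, already having Euclidean weight at least $16$). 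This is where $\wt(v)=4$ and the extremality of $C$ enter: together they sharply limit the weight-$4$ and weight-$8$ vectors in the coset $v+C^{(1)}$ and the sign patterns of their $\ZZ_4$-lifts that can lie in $C'_u$, reducing the matter to a finite verification over the low-weight part of $\langle C^{(1)},v\rangle$ and the finitely many admissible $u$ modulo $C^{(1)}$. I expect this finite analysis to be essentially the whole difficulty; for $n=32$ and $40$ it proceeds exactly as in the length-$24$ argument of~\cite{HLM}, with only the numerical weight thresholds changing, which is why the modification is called straightforward.
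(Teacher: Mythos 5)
Your construction is exactly the one behind the paper's (omitted) proof---the paper just cites \cite{HLM}, where the length-$24$ case is proved with this same index-two subcode $C_0$---and your routine verifications (that $v\in C^{(2)}\setminus C^{(1)}$, that $C_0$ has index $2$ and still reduces onto $C^{(1)}$, and that each $C'_u$ is self-dual with residue $\langle C^{(1)},v\rangle$) are correct. But the proposal stops where the content of the lemma begins, and your plan for finishing rests on a misconception about the freedom in $u$. Writing $\bar u=u\bmod 2\in C^{(2)}$, two members $C'_u,C'_{u'}$ of your family coincide exactly when $v\cdot(\bar u+\bar u')$ is even, so the family contains precisely \emph{two} codes: $\langle C_0,\tilde v\rangle$ (never Type~II, since its generator $\tilde v$ has Euclidean weight $4$), and one further code, obtained from any $u$ with $v\cdot\bar u$ odd, which is always Type~II (this needs the computation you omit: $\wt_E(\tilde v+2u)=4+4\,|\supp(\bar u)\setminus\supp(v)|\equiv 0\pmod 8$ because $\wt(\bar u)$ is even, as $\allone\in C^{(1)}$, while $|\supp(\bar u)\cap\supp(v)|$ is odd; the mass formula you invoke is a non sequitur here, since it says nothing about codes containing $C_0$). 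Consequently, once Type~II is imposed there is nothing left to choose: ``choosing the lift $u$ so that neither shape occurs'' is vacuous, and the proposed ``finite verification over \dots the finitely many admissible $u$'' cannot be carried out as described.

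What is genuinely missing is the proof that this unique Type~II member is automatically extremal---that \emph{is} the lemma, and deferring it to \cite{HLM} is circular, since that citation is the paper's entire proof. The missing argument is short and uses the two hypotheses your sketch never exploits quantitatively, namely $2\tilde v\in C$ (because $v\in C^{(2)}$) and $\wt(v)=4$. Suppose the new code had a codeword $x$ with $\wt_E(x)=8$. Then $x\notin C_0\subseteq C$, so $x=\tilde v+c$ with $c\in C\setminus C_0$, and both $x-\tilde v=c$ and $x+\tilde v=c+2\tilde v$ are nonzero codewords of $C$ (if $x=\pm\tilde v$ then $\wt_E(x)=4\ne 8$). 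Checking the four coordinates of $\supp(v)$ gives
\[
\wt_E(x+\tilde v)+\wt_E(x-\tilde v)=2\wt_E(x)+8-8\,\#\{i\in\supp(v)\colon x_i=2\}\le 24,
\]
whereas extremality of $C$ forces both summands to be at least $16$, i.e.\ the sum to be at least $32$---a contradiction. This kills both of your shapes $(4,1)$ and $(8,0)$ at once, with no case analysis and no choice of $u$, and it is uniform in $n\in\{24,32,40\}$ because $8\lfloor n/24\rfloor+8=16$ for all three lengths; that uniformity is precisely why the paper can call the modification of the length-$24$ argument straightforward.
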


\subsection{Construction method}\label{SS:method}
In this subsection, we review the method of construction of
Type~II $\ZZ_4$-codes, which was given in~\cite{Z4-PLF}.
Let $C_1$ be a binary code of length
$n \equiv 0 \pmod 8$ and dimension $k$ 
satisfying the conditions (\ref{eq:b1}) and (\ref{eq:b2}).
Without loss of generality, we may assume that $C_1$ has
generator matrix of the following form:
\begin{equation}  \label{Eq:G1}
G_1=
\left(\begin{array}{cc}
  A    &   \tilde{I_k} \\
\end{array}\right),
\end{equation}
where 
$A$ is a $k \times (n-k)$ matrix which has the property that
the first row is  $\allone$,
${\displaystyle
\tilde{I_k}=
\left(\begin{array}{cccccc}
1       & \cdots   & 1 \\
0       &          &   \\
\vdots  &   I_{k-1}&   \\
0       &          &   \\
\end{array}\right),
}$
and
$I_{k-1}$ denotes the identity matrix of order $k-1$.
Since $C_1$ is self-orthogonal,
the matrix $G_1$ can be extended to a generator matrix 
of $C_1^\perp$ as follows
$
\left(\begin{array}{cc}
G_1 \\
D
\end{array}\right).
$
Then there are $2^{1+k(k-1)/2}$
$k \times k$ $(1,0)$-matrices $B$ such that the following matrices
\begin{equation}\label{Eq:Z4-G}
\left(\begin{array}{cccccc}
&A &    & \tilde{I_k}+2B \\
&  & 2D &               \\
\end{array}\right)
\end{equation}
are generator matrices of Type~II $\ZZ_4$-codes $C$,
where we regard the matrices as matrices over $\ZZ_4$.
That is, there are $2^{1+k(k-1)/2}$ Type~II $\ZZ_4$-codes
$C$ with $C^{(1)}=C_1$~\cite{G,Z4-PLF}.

Since any Type~II $\ZZ_4$-code is equivalent to some 
Type~II $\ZZ_4$-code
containing $\allone$~\cite{Z4-HSG},
without loss of generality, we may assume that the first row
of $B$ is the zero vector.
This reduces 
our search space for finding extremal Type~II $\ZZ_4$-codes.
In fact, there are only $2^{(k-1)(k-2)/2}$ Type~II $\ZZ_4$-codes 
$C$ with $C^{(1)}=C_1$ containing $\allone$
(see also~\cite{BM09}).

\section{Extremal Type~II $\ZZ_4$-codes of length 32}

\subsection{Known extremal Type~II $\ZZ_4$-codes of length 32}
Currently,
$57$ inequivalent
extremal Type~II $\ZZ_4$-codes of length $32$ are known 
(see~\cite{Z4-GaH,Huffman05}).
Among the $57$ known codes,
$54$ codes have residue codes which are
extremal doubly even self-dual
codes.
In particular, 
for every binary extremal doubly even self-dual
code $B$ of length $32$, 
there is an extremal Type~II $\ZZ_4$-code $C$
with $C^{(1)} \cong B$~\cite{Z4-GaH}. 

Only $C_{5,1}$ in~\cite{Z4-BSBM} and
$\tilde C_{31,2}, \tilde C_{31,3}$ in~\cite{Z4-PSQ}
are known  extremal Type~II $\ZZ_4$-codes 
whose residue codes are not extremal doubly even self-dual
codes (see~\cite{Z4-GaH}).
The residue codes of  $\tilde C_{31,2}, \tilde C_{31,3}$ 
in~\cite{Z4-PSQ} have dimension $11$.
The residue code of $C_{5,1}$ in~\cite{Z4-BSBM}
is the first order Reed--Muller code
$RM(1,5)$ of length $32$, thus, $\dim(C_{5,1}^{(1)})=6$.
In Section~\ref{Subsec:32-dim6}, 
we show that
there is a unique extremal Type~II $\ZZ_4$-code of
length $32$ with residue code of dimension $6$, up to equivalence.

\subsection{Determination of dimensions of residue codes}

By Lemma~\ref{Lem:bound}, 
if $C$ is an extremal Type~II $\ZZ_4$-code of length 
$32$, then $6 \le \dim(C^{(1)}) \le 16$. 
In this subsection, we show the converse assertion using Lemma~\ref{Lem:wt4}.
To do this, we first
fix the coordinates of $RM(1,5)$ 
by considering the following matrix  as a 
generator matrix of $RM(1,5)$:
\begin{equation}\label{eq:RM}
\left(\begin{array}{cccc}
1111 1111& 1111 1111& 1111 1111& 1111 1111\\
1111 1111& 1111 1111& 0000 0000& 0000 0000\\
1111 1111& 0000 0000& 1111 1111& 0000 0000\\
1111 0000& 1111 0000& 1111 0000& 1111 0000\\
1100 1100& 1100 1100& 1100 1100& 1100 1100\\
1010 1010& 1010 1010& 1010 1010& 1010 1010
\end{array}\right).
\end{equation}
It is well known that $RM(1,5)$ has 
the following weight enumerator:
\begin{equation}\label{eq:WE}
1 + 62 y^{16} + y^{32}.
\end{equation}

For $i=7,8,\ldots,15$, 
we define $B_{32,i}$ to be the binary code
$\langle B_{32,i-1}, v_i \rangle$,
where $B_{32,6}=RM(1,5)$ and the support $\supp(v_i)$ of
the vector $v_i$ is listed in Table~\ref{Tab:32}.
The weight distributions of $B_{32,i}$ $(i=7,8,\ldots,15)$
are also listed in the table, where
$A_j$ denotes the number of codewords of weight $j$
($j=4,8,12,16$).
From the weight distributions,
one can easily verify that $v_i \not\in B_{32,i-1}$ and 
$B_{32,i}$ is doubly even
for $i=7,8,\ldots,15$.
Note that the code $C_{5,1}$ in~\cite{Z4-BSBM} is an
extremal Type~II $\ZZ_4$-code with residue code $RM(1,5)$, 
and there are extremal Type~II $\ZZ_4$-codes
with residue codes of dimension $16$.
By Lemma~\ref{Lem:wt4}, 
we have the following:

\begin{prop}\label{Prop:1}
There is an extremal Type~II $\ZZ_4$-code of length $32$
whose residue code has dimension $k$
if and only if $k \in \{6,7,\ldots,16\}$.
\end{prop}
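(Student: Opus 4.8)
The plan is to establish Proposition~\ref{Prop:1} by combining the lower bound already available from Lemma~\ref{Lem:bound} with an explicit chain-building argument that realizes every intermediate dimension. The ``only if'' direction is immediate: Lemma~\ref{Lem:bound} tells us that any residue code of an extremal Type~II $\ZZ_4$-code of length $32$ has dimension between $6$ and $16$, so no dimension outside $\{6,7,\ldots,16\}$ can occur. The work lies entirely in the ``if'' direction, where we must exhibit, for each $k$ in this range, an extremal Type~II $\ZZ_4$-code whose residue code has dimension exactly $k$.

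For the interior dimensions $k \in \{7,8,\ldots,15\}$, I would use the ascending chain $B_{32,6} \subset B_{32,7} \subset \cdots \subset B_{32,15}$ together with Lemma~\ref{Lem:wt4}. The key observation is that each step augments the code by a single weight-$4$ vector $v_i$, so that $\dim(B_{32,i}) = \dim(B_{32,i-1})+1$, and the weight distributions tabulated in Table~\ref{Tab:32} certify both that $v_i \notin B_{32,i-1}$ (the dimension genuinely increases) and that each $B_{32,i}$ remains doubly even. Starting from the known extremal code $C_{5,1}$ of~\cite{Z4-BSBM}, whose residue code is $RM(1,5) = B_{32,6}$ of dimension $6$, a single application of Lemma~\ref{Lem:wt4} with $v = v_7$ produces an extremal Type~II $\ZZ_4$-code with residue code $B_{32,7}$ of dimension $7$. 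Iterating this argument $k-6$ times yields, for each $k \le 15$, an extremal code whose residue code is $B_{32,k}$, of dimension exactly $k$. This is a clean induction: Lemma~\ref{Lem:wt4} guarantees that extremality is preserved at every step, so no further verification of minimum Euclidean weight is needed.

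The two endpoints require only brief separate remarks. For $k=6$, the code $C_{5,1}$ of~\cite{Z4-BSBM} already has residue code $RM(1,5)$ of dimension $6$, so this case is settled directly. For $k=16$, one invokes the fact recalled in the preceding subsection that there exist extremal Type~II $\ZZ_4$-codes whose residue codes are the extremal binary doubly even self-dual codes of length $32$; such residue codes have dimension $16 = n/2$, furnishing the top of the range. Together with the chain argument covering $7 \le k \le 15$, this completes every value in $\{6,7,\ldots,16\}$.

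The main obstacle, which is actually discharged by the data in Table~\ref{Tab:32}, is ensuring that a suitable weight-$4$ vector $v_i$ exists at every step that both lies outside the current code and keeps the augmented code doubly even; the existence of such a vector is precisely the hypothesis required to apply Lemma~\ref{Lem:wt4}. In principle this is the delicate point, since doubly even self-orthogonality imposes strong parity constraints, but here it is resolved computationally once and for all: the tabulated weight distributions verify both conditions simultaneously for the explicit vectors $v_7,\ldots,v_{15}$. Given those verifications, the proof reduces to the straightforward inductive application of Lemma~\ref{Lem:wt4} described above, and no genuinely hard step remains.
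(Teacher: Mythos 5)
Your proposal is correct and follows essentially the same route as the paper: the ``only if'' direction from Lemma~\ref{Lem:bound}, the endpoints $k=6$ and $k=16$ from the known code $C_{5,1}$ and from extremal Type~II $\ZZ_4$-codes with extremal doubly even self-dual residue codes, and the intermediate dimensions $7 \le k \le 15$ by iterating Lemma~\ref{Lem:wt4} along the chain $B_{32,6} \subset B_{32,7} \subset \cdots \subset B_{32,15}$ certified by the weight distributions in Table~\ref{Tab:32}. No gaps; this is the paper's argument.
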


\begin{rem}
In the next two subsections,
we study two cases $k=6$ and $16$.
\end{rem}

\begin{table}[thb]
\centering
\caption{Supports $\supp(v_i)$ and weight distributions of $B_{32,i}$}
\vspace*{0.2in}
\label{Tab:32}
{\small
\begin{tabular}{c|l|rrrr}
\noalign{\hrule height1pt}
$i$ & \multicolumn{1}{c|}{$\supp(v_i)$} 
&$A_4$ &$A_8$ & $A_{12}$& $A_{16}$ \\
\hline
 7&$\{1, 2,  3,  4\}$ &  1&  0&   7&   110\\ 
 8&$\{1, 2,  5,  6\}$ &  3&  0&  21&   206\\ 
 9&$\{1, 2,  7,  8\}$ &  6&  4&  42&   406\\ 
10&$\{1, 2,  9, 10\}$ & 10& 12& 102&   774\\ 
11&$\{1, 2, 11, 12\}$ & 16& 36& 208&  1526\\ 
12&$\{1, 2, 13, 14\}$ & 28& 84& 420&  3030\\
13&$\{1, 2, 17, 18\}$ & 36&196& 924&  5878\\
14&$\{1, 2, 19, 20\}$ & 48&428&1936& 11558\\
15&$\{1, 2, 21, 22\}$ & 72&892&3960& 22918\\
\noalign{\hrule height1pt}
\end{tabular}
}
\end{table}

As another approach to Proposition~\ref{Prop:1},
we explicitly found an extremal Type~II $\ZZ_4$-code
$C_{32,i}$ with $C_{32,i}^{(1)} \cong B_{32,i}$
for $i=7,8,\ldots,15$,
using the method given in Section~\ref{SS:method}.
Any $\ZZ_4$-code with residue code of dimension $k$
is equivalent to a code with generator
matrix of the form:
\begin{equation}  \label{Eq:SF}
\left(\begin{array}{cc}
  I_k   &   A \\
  O     &2B
\end{array}\right),
\end{equation}
where $A$ is a matrix over $\ZZ_4$ and $B$ is
a $(1,0)$-matrix.
For these codes $C_{32,i}$, we give generator matrices
of the form (\ref{Eq:SF}), by only listing in Figure~\ref{Fig:dim} 
the $i \times (32-i)$ matrices $A$ in (\ref{Eq:SF}) to save space.
Note that the lower part in (\ref{Eq:SF})
can be obtained from the matrices
$\left(\begin{array}{cc}
  I_k   &   A \\
\end{array}\right)$,
since $C^{(2)} = {C^{(1)}}^{\perp}$ and  
$\left(\begin{array}{cc}
  I_k   &   A \bmod 2\\
\end{array}\right)$ is a generator matrix of
${C^{(1)}}$,
where $A \bmod 2$ denotes the binary matrix
whose $(i,j)$-entry is $a_{ij} \bmod 2$ for $A=(a_{ij})$.

\begin{figure}[thb]
\centering
{\footnotesize
\begin{tabular}{ll}
$
\left(\begin{array}{c}
0 0 0 0 0 0 0 0 0 0 0 0 0 0 0 0 0 2 0 0 3 3 3 2 2\\
1 1 0 0 1 1 1 1 0 0 1 1 0 0 0 1 1 2 1 1 1 1 0 1 2\\
1 0 1 0 1 0 1 0 1 0 1 0 1 0 1 1 0 1 1 0 1 0 1 1 1\\
0 1 1 0 0 1 1 0 0 1 1 0 0 1 1 0 1 1 0 1 0 1 1 1 1\\
0 0 1 1 1 1 0 0 0 0 1 1 1 1 0 0 0 0 1 1 2 2 2 0 3\\
0 0 1 1 1 1 1 1 1 1 0 0 0 0 0 1 1 3 0 0 2 2 2 0 0\\
1 1 1 1 1 1 1 1 1 1 1 1 1 1 1 0 0 0 0 0 0 0 0 0 0
\end{array}\right)
$&
$
\left(\begin{array}{c}
0 0 0 0 0 0 0 0 0 0 0 0 0 0 0 0 0 3 3 3 2 2 2 0\\
0 0 0 0 0 0 0 0 0 0 0 0 0 0 0 0 0 1 3 2 1 2 0 2\\
1 0 1 0 1 0 1 0 1 0 1 0 1 0 1 1 0 1 0 1 1 1 1 1\\
1 0 1 0 1 0 0 1 0 1 0 1 0 1 1 1 0 0 3 3 1 3 3 1\\
1 1 1 1 0 0 1 1 0 0 0 0 1 1 0 1 1 0 2 0 0 3 0 0\\
1 1 0 0 1 1 1 1 0 0 1 1 0 0 0 1 1 0 2 0 2 2 3 0\\
0 0 1 1 1 1 1 1 1 1 0 0 0 0 0 1 1 2 2 0 2 2 2 3\\
1 1 1 1 1 1 1 1 1 1 1 1 1 1 1 0 0 0 0 0 0 0 0 0
\end{array}\right)
$\\$
\left(\begin{array}{c}
0 0 0 0 0 0 2 0 0 0 0 0 0 2 0 0 3 3 0 0 0 3 2\\
0 0 0 0 0 0 0 0 0 0 0 0 0 0 0 0 1 3 2 2 0 2 1\\
1 0 1 0 1 0 1 1 0 1 0 1 0 1 1 0 1 0 1 1 1 1 1\\
1 0 1 0 1 0 1 1 0 1 0 1 0 3 1 0 0 3 1 3 1 3 1\\
1 1 1 1 0 0 2 1 1 1 1 0 0 2 1 1 2 2 2 0 3 0 0\\
1 1 0 0 1 1 2 1 1 0 0 1 1 0 1 1 2 2 3 2 0 0 2\\
0 0 1 1 1 1 2 0 0 1 1 1 1 2 1 1 0 2 0 3 2 2 0\\
0 0 0 0 0 0 0 1 1 1 1 1 1 1 0 0 2 0 0 2 2 0 2\\
1 1 1 1 1 1 1 0 0 0 0 0 0 0 0 0 0 0 2 2 2 2 0
\end{array}\right)
$&
$
\left(\begin{array}{c}
0 0 0 0 0 0 2 0 0 0 0 0 0 3 3 3 2 2 2 0 0 2\\
0 0 0 0 0 0 2 0 0 0 0 0 0 1 3 2 1 2 0 0 0 0\\
0 0 0 0 0 0 2 0 0 0 0 0 0 1 3 2 0 1 2 0 2 2\\
0 0 0 0 0 0 2 0 0 0 0 0 0 1 3 2 2 0 1 2 2 0\\
0 0 1 1 1 1 0 0 0 1 1 1 1 3 3 2 0 0 2 3 2 0\\
1 1 0 0 1 1 2 1 1 0 0 1 1 3 3 2 0 2 2 0 3 2\\
1 0 1 0 1 0 1 1 0 1 0 1 0 1 0 1 1 1 1 1 1 1\\
0 1 0 1 1 0 3 0 1 0 1 1 0 0 3 3 1 1 1 3 3 3\\
0 0 0 0 0 0 0 1 1 1 1 1 1 2 0 0 2 0 2 2 0 1\\
1 1 1 1 1 1 1 0 0 0 0 0 0 2 0 0 0 0 0 2 0 0
\end{array}\right)
$\\$
\left(\begin{array}{c}
0 0 0 0 0 0 2 0 0 0 0 0 1 3 1 2 0 2 0 0 0\\
0 0 0 0 0 0 2 0 0 2 0 0 1 3 0 1 0 2 2 0 2\\
0 0 0 0 0 0 0 0 0 2 0 0 3 3 0 0 3 2 0 0 2\\
0 0 0 0 0 0 0 0 0 2 0 0 1 3 0 0 2 1 2 0 0\\
0 0 0 0 0 0 2 0 0 2 0 0 1 3 2 0 0 0 1 0 0\\
1 1 0 0 1 1 2 1 1 0 1 1 3 3 0 2 0 0 0 2 3\\
1 0 1 0 1 0 1 1 0 1 1 0 1 0 1 1 1 1 1 1 1\\
0 1 1 0 0 1 3 0 1 1 0 1 0 3 1 1 3 1 1 1 3\\
0 0 1 1 1 1 0 0 0 2 1 1 0 0 0 0 0 0 0 1 2\\
0 0 1 1 1 1 2 1 1 1 0 0 0 0 0 2 2 2 2 2 0\\
1 1 1 1 1 1 1 0 0 2 0 0 2 0 0 0 2 2 0 0 0
\end{array}\right)
$&
$
\left(\begin{array}{c}
0 0 0 0 0 0 2 0 0 3 3 2 0 2 0 0 2 2 3 0\\
0 0 0 0 0 0 0 0 0 3 3 0 0 2 0 2 2 2 2 3\\
0 0 0 0 0 0 2 0 0 1 3 2 2 0 2 0 2 1 2 2\\
0 0 0 0 0 0 2 0 0 1 3 0 2 2 0 0 1 0 2 2\\
0 0 0 0 0 0 2 0 0 3 3 0 2 0 0 3 0 0 2 0\\
0 0 0 0 0 0 2 0 0 3 3 0 0 0 3 2 0 2 2 2\\
1 0 1 0 1 0 1 1 0 1 0 1 1 1 1 1 1 1 1 1\\
1 0 1 0 1 0 1 1 0 0 3 1 1 3 3 3 1 1 3 3\\
1 1 1 1 0 0 2 1 1 2 0 0 2 1 2 2 0 2 0 0\\
1 1 0 0 1 1 0 1 1 0 0 0 1 2 0 2 2 2 0 0\\
0 0 1 1 1 1 0 1 1 0 0 1 0 0 0 0 0 2 2 0\\
1 1 1 1 1 1 1 0 0 0 0 0 0 2 2 2 2 2 2 0
\end{array}\right)
$
\end{tabular}
\caption{
Matrices $A$
in generator matrices of $C_{32,i}$} 
\label{Fig:dim}
}
\end{figure}
\begin{figure}[thb]
\centering
{\footnotesize
\begin{tabular}{ll}
$
\left(\begin{array}{c}
0 0 0 0 0 0 1 1 3 0 2 2 0 0 2 2 2 2 2\\
0 0 0 0 0 0 3 1 0 1 2 0 0 2 0 2 0 2 2\\
0 0 0 0 0 0 3 3 0 2 3 0 2 2 0 0 0 2 2\\
0 0 0 0 0 0 1 3 2 2 0 1 2 0 0 0 0 2 2\\
0 0 0 0 0 0 3 3 2 0 0 2 3 0 0 2 0 0 0\\
0 0 0 0 0 0 3 3 2 2 0 0 2 3 2 2 0 0 0\\
0 0 0 0 0 0 1 3 2 2 0 2 0 2 1 2 2 0 2\\
0 0 0 0 0 0 1 1 0 2 2 0 0 0 2 3 0 2 2\\
0 0 1 1 1 1 3 3 0 0 2 0 2 2 2 2 1 0 2\\
1 1 0 0 1 1 1 3 2 0 2 0 0 0 2 2 2 3 2\\
1 0 1 0 1 0 1 0 3 3 1 1 1 1 1 3 3 3 3\\
0 1 0 1 1 0 0 1 3 1 1 3 1 1 3 3 3 1 3\\
1 1 1 1 1 1 2 2 2 2 2 2 0 0 2 2 2 2 1
\end{array}\right)
$
$
\left(\begin{array}{c}
0 0 0 0 0 1 1 3 0 2 0 0 2 2 2 0 0 2\\
0 0 0 0 0 1 1 2 3 0 0 2 2 0 2 2 0 0\\
0 0 2 0 0 1 3 2 0 1 0 0 2 2 0 2 0 0\\
0 0 0 0 0 3 1 0 0 2 1 2 0 2 0 0 2 2\\
0 0 0 0 0 1 3 0 2 2 0 1 2 0 2 0 2 0\\
0 0 0 0 0 1 3 2 2 0 2 0 1 0 2 2 0 0\\
0 0 0 0 0 3 1 2 0 0 0 2 2 1 0 0 0 0\\
0 0 0 0 0 3 1 2 2 2 2 0 0 2 1 0 2 2\\
0 0 2 0 0 3 3 2 0 2 0 0 2 0 0 3 2 0\\
1 1 0 1 1 1 3 2 0 2 0 2 2 2 0 0 2 3\\
1 0 1 1 0 3 0 1 1 3 1 3 3 1 1 3 3 1\\
0 1 1 0 1 0 3 1 1 1 3 1 1 3 3 3 1 3\\
0 0 0 1 1 2 0 2 2 2 0 0 2 2 0 0 3 2\\
1 1 3 0 0 2 2 0 0 2 0 0 0 0 0 2 0 2
\end{array}\right)
$
$
\left(\begin{array}{c}
0 0 1 1 0 2 2 2 0 2 2 2 0 0 0 2 3\\
0 0 1 3 0 0 0 2 2 0 2 0 2 0 0 1 2\\
0 0 3 3 2 2 0 2 2 2 2 0 0 0 3 0 2\\
0 0 3 1 2 2 0 2 2 2 2 0 0 1 0 2 0\\
0 0 1 1 0 0 2 3 0 0 0 2 2 2 0 2 0\\
0 0 1 1 0 2 0 2 3 0 2 2 0 2 0 2 2\\
0 0 3 3 0 2 2 2 2 3 2 2 0 2 0 0 0\\
0 0 3 1 0 0 0 0 2 2 1 0 2 0 2 0 2\\
0 0 1 1 2 2 0 0 0 0 0 3 0 0 2 0 0\\
0 0 3 1 0 2 0 2 0 0 0 0 1 2 0 0 0\\
1 0 3 0 1 3 3 1 1 3 1 1 1 1 3 3 1\\
1 0 0 1 3 3 3 3 3 1 1 3 1 1 1 3 3\\
1 1 0 0 0 0 3 2 0 2 0 0 0 0 0 0 2\\
1 1 0 0 2 3 2 0 2 2 0 2 2 2 2 0 2\\
1 1 0 2 3 0 2 2 2 2 2 0 2 0 0 2 2
\end{array}\right)
$
\end{tabular}
\setcounter{figure}{0}
\caption{
Matrices $A$ in generator matrices of $C_{32,i}$
(continued)} 
}
\end{figure}

\subsection{Residue codes of dimension 16}

As described above,
there are $85$ inequivalent binary
doubly even self-dual codes of length $32$.
These codes are denoted by 
$\text{C1}, \text{C2}, \ldots, \text{C85}$
in~\cite[Table A]{CPS}, where 
$\text{C81}, \ldots, \text{C85}$ are extremal.
For each $B$ of the $5$ extremal ones,
there is an extremal Type~II $\ZZ_4$-code $C$
with $C^{(1)} \cong B$~\cite{Z4-GaH}.

Using the method given in Section~\ref{SS:method}, 
we explicitly found an extremal Type~II $\ZZ_4$-code $D_{32,i}$
with $D_{32,i}^{(1)} \cong \text{C}i$ for $i=1,2,\ldots,80$.
The matrices $M_{i}$ in 
generator matrices  of the form
$\left(\begin{array}{cc}
  I_{16}   &   M_i \\
\end{array}\right)$
$(i=1,2,\ldots,80)$
can be obtained electronically from
\begin{verbatim}
http://sci.kj.yamagata-u.ac.jp/~mharada/Paper/z4-32.txt
\end{verbatim}
Hence, we have the following:
\begin{prop}\label{Prop:32dim16}
Every binary doubly even self-dual code
of length $32$ can be realized as the residue code of some 
extremal Type~II $\ZZ_4$-code.
\end{prop}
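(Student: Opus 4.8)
The plan is to reduce the statement to a finite computation by invoking the classification of binary doubly even self-dual codes of length $32$. By~\cite{CPS} there are exactly $85$ such codes up to equivalence, namely $\text{C1},\ldots,\text{C85}$, and since the residue-code property is invariant under permutation equivalence it suffices to realize each of these $85$ representatives as the residue code of some extremal Type~II $\ZZ_4$-code. For the five extremal members $\text{C81},\ldots,\text{C85}$ the existence of such a code is already known from~\cite{Z4-GaH}, so the real work is to treat the $80$ non-extremal codes $\text{C1},\ldots,\text{C80}$.

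For each such $B=\text{C}i$ I would apply the construction method of Section~\ref{SS:method} with $C_1=B$. Note first that $B$ satisfies conditions~(\ref{eq:b1}) and~(\ref{eq:b2}): it is doubly even by hypothesis, and being an even self-dual code it contains $\allone$, since $\langle\allone,x\rangle\equiv\wt(x)\equiv 0\pmod 2$ forces $\allone\in B^\perp=B$. Because $B$ is self-dual of dimension $16=n/2$, we have $B^\perp=B$, so in the notation of Section~\ref{SS:method} the matrix $D$ is empty and the candidate $\ZZ_4$-codes are exactly those with generator matrix $\bigl(A\ \ \tilde I_{16}+2B'\bigr)$ over $\ZZ_4$, one for each of the $2^{(15\cdot14)/2}=2^{105}$ normalized $(1,0)$-matrices $B'$. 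Every code in this family is Type~II with residue code $B$, so it only remains to exhibit one member whose minimum Euclidean weight equals the extremal value $8\lfloor 32/24\rfloor+8=16$.

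The hard part will be the search itself: the family has size $2^{105}$, far too large for exhaustive enumeration, so I would instead run a randomized or incremental search over the lift (equivalently over the $\ZZ_4$-entries of the matrix $A$ in~(\ref{Eq:SF})), repeatedly generating a candidate code and using {\sc Magma} to compute its minimum Euclidean weight, stopping as soon as weight $16$ is attained. In practice one records, for each $i$, a single successful lift $M_i$; the resulting generator matrices $\bigl(I_{16}\ M_i\bigr)$ furnish explicit witnesses $D_{32,i}$ with $D_{32,i}^{(1)}\cong\text{C}i$, and extremality is then confirmed by a direct Magma computation of $d_E$. Combining the $80$ constructed codes with the five from~\cite{Z4-GaH} realizes all $85$ equivalence classes, which proves the proposition.
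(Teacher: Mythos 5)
Your proposal is correct and takes essentially the same route as the paper: reduce via the classification of~\cite{CPS} to the $85$ representatives $\text{C1},\ldots,\text{C85}$, invoke~\cite{Z4-GaH} for the five extremal ones, and for each of the remaining $80$ codes use the lifting method of Section~\ref{SS:method} with a computer search in {\sc Magma} to exhibit an explicit extremal Type~II lift. This is precisely what the paper does, recording one successful lift per code as a matrix $M_i$ in a generator matrix $\left(\begin{array}{cc} I_{16} & M_i \end{array}\right)$ made available electronically.
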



Among known $57$ inequivalent
extremal Type~II $\ZZ_4$-codes of length $32$,
the residue codes of $54$ codes are
extremal doubly even self-dual
codes and the residue codes of the other three codes 
$C_{5,1}$ in~\cite{Z4-BSBM} and
$\tilde C_{31,2}, \tilde C_{31,3}$ in~\cite{Z4-PSQ}
have dimensions $6,11$ and $11$, respectively.
In particular, 
$\tilde C_{31,2}^{(1)}$ and $\tilde C_{31,3}^{(1)}$
have the following identical weight enumerators:
\[
1+ 496^{12}+1054^{16}+ 496^{20}+y^{32}.
\]
Hence, none of $\tilde C_{31,2}$ and $\tilde C_{31,3}$
is equivalent to $C_{32,11}$.
The code $C_{32,i}^{(1)}$ 
has dimension $i$ for $i=7,8,\ldots,15$, 
and $D_{32,i}^{(1)}$ is a 
non-extremal doubly even self-dual code
for $i=1,2,\ldots,80$. 
Since 
equivalent $\ZZ_4$-codes have equivalent residue codes,
we have the following:

\begin{cor}
There are at least $146$ inequivalent
extremal Type~II $\ZZ_4$-codes of length $32$.
\end{cor}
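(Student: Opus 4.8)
The plan is to obtain the lower bound by a counting argument built on the fact, recalled just above the statement, that equivalent $\ZZ_4$-codes have equivalent residue codes; consequently, two extremal Type~II $\ZZ_4$-codes whose residue codes are inequivalent binary codes must themselves be inequivalent. I would assemble three families: the $57$ previously known inequivalent extremal codes, the nine codes $C_{32,i}$ $(i=7,8,\ldots,15)$ constructed in this section, and the eighty codes $D_{32,i}$ $(i=1,2,\ldots,80)$. The goal is then to verify that these three families are pairwise disjoint (as equivalence classes) and that within each family the relevant members are pairwise inequivalent, so that together they contribute at least $57+9+80=146$ pairwise inequivalent codes. Throughout, the dimension, the extremality, and the weight enumerator of the residue code serve as the separating invariants.

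First I would dispose of every separation that is governed purely by the dimension of the residue code. The $57$ known codes have residue codes of dimension $6$, $11$, or $16$, whereas $C_{32,i}^{(1)}$ has dimension $i$; hence for $i\in\{7,8,9,10,12,13,14,15\}$ the code $C_{32,i}$ is inequivalent to every known code, and since these residue dimensions are distinct, the corresponding eight codes are pairwise inequivalent. For the codes $D_{32,i}$, each has residue code $\cong \text{C}i$, a non-extremal doubly even self-dual code of dimension $16$; since $\text{C}1,\ldots,\text{C}80$ are pairwise inequivalent by the classification, the $D_{32,i}$ are pairwise inequivalent, and because their residue codes are non-extremal they are distinct from the $54$ known dimension-$16$ codes (whose residue codes are extremal) and, by dimension, from every code with residue code of dimension $6$ through $15$.

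The single case requiring a finer invariant is dimension $11$, where I must separate the new code $C_{32,11}$ from the two known codes $\tilde C_{31,2}$ and $\tilde C_{31,3}$. Here I would compare residue weight enumerators: $C_{32,11}^{(1)}=B_{32,11}$ has $A_4=16$ codewords of weight $4$, whereas $\tilde C_{31,2}^{(1)}$ and $\tilde C_{31,3}^{(1)}$ share the weight enumerator $1+496y^{12}+1054y^{16}+496y^{20}+y^{32}$ and thus have minimum weight $12$; therefore $C_{32,11}$ is inequivalent to both, while $\tilde C_{31,2}$ and $\tilde C_{31,3}$ are inequivalent to each other by virtue of being counted among the $57$ known inequivalent codes. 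Since $11\notin\{7,8,9,10,12,13,14,15\}$, the code $C_{32,11}$ is also inequivalent, by dimension, to the other eight $C_{32,i}$, so all nine $C_{32,i}$ are pairwise inequivalent and distinct from the known family. Collecting the three families then yields $57+9+80=146$ pairwise inequivalent extremal Type~II $\ZZ_4$-codes.

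The main obstacle is purely bookkeeping: one must be certain that the chosen residue-code invariants genuinely separate every pair drawn from different families, with no accidental coincidence collapsing the count. The only stratum in which the dimension alone is insufficient is dimension $11$, so the weight-enumerator data recorded above is precisely the input needed to close that gap; everywhere else the dimension together with the extremality of the residue code suffices.
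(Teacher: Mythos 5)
Your proposal is correct and follows essentially the same argument as the paper: both use the fact that equivalent $\ZZ_4$-codes have equivalent residue codes, separate the $57$ known codes, the nine $C_{32,i}$, and the eighty $D_{32,i}$ by residue-code dimension and by extremality of the dimension-$16$ residue codes, and resolve the only ambiguous stratum (dimension $11$) by comparing the weight enumerator of $B_{32,11}$ (which has $A_4=16$) with that of $\tilde C_{31,2}^{(1)}=\tilde C_{31,3}^{(1)}$ (minimum weight $12$).
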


\begin{rem}\label{rem:32}
The torsion code of any of the $9$ codes 
$C_{32,i}$ $(i=7,8,\ldots,15)$
has minimum weight $4$, since 
the residue code has minimum weight $4$
and the torsion code of an extremal Type~II $\ZZ_4$-code
contains no codeword of weight $2$.
The torsion code of any of the $80$ codes 
$D_{32,i}$ $(i=1,2,\ldots,80)$
has minimum weight $4$.
By Theorem 1 in \cite{Rains}, any of the $89$ codes 
$C_{32,i}$ and $D_{32,i}$ 
has minimum Hamming weight $4$.
In addition, 
any of the codes has minimum Lee weight $8$,
since the minimum Lee weight of an extremal Type~II $\ZZ_4$-code
with minimum Hamming weight $4$ is $8$
(see~\cite{Z4-BSBM} for the definitions).
\end{rem}


\subsection{Residue codes of dimension 6}
\label{Subsec:32-dim6}

At length $24$,
the smallest dimension among codes 
satisfying the conditions (\ref{eq:b1})--(\ref{eq:b3}) is $6$.
There is a unique binary $[24,6]$ code 
satisfying (\ref{eq:b1})--(\ref{eq:b3}),
and there is a unique extremal Type~II
$\ZZ_4$-code with residue code of dimension $6$ 
up to equivalence~\cite{HLM}.
In this subsection, we show that a similar situation holds 
for length $32$.

\begin{lem}\label{Lem:RM}
Up to equivalence, $RM(1,5)$ is a unique binary
$[32,6]$ code
satisfying the conditions (\ref{eq:b1})--(\ref{eq:b3}).
\end{lem}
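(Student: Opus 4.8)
The plan is to pass to the columns of a generator matrix of $B$, viewed as vectors in $\FF_2^6$, and to show that conditions (\ref{eq:b2}) and (\ref{eq:b3}) force these columns to be exactly the points of an affine hyperplane; such a column set characterizes $RM(1,5)$. So first I would fix a generator matrix $G$ of $B$, a $6\times 32$ matrix over $\FF_2$ with columns $g_1,\dots,g_{32}\in\FF_2^6$, and record that a codeword $uG$ $(u\in\FF_2^6)$ has $j$-th coordinate $\la u,g_j\ra$.

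Condition (\ref{eq:b3}) says $B^\perp$ has minimum weight at least $4$, hence at least $3$. Since the minimum weight of $B^\perp$ equals the least number of columns of $G$ that sum to $0$, this rules out any zero column (a weight-$1$ relation) and any pair of equal columns (a weight-$2$ relation $g_i+g_j=0$). Thus the $g_j$ are $32$ distinct nonzero vectors of $\FF_2^6$.

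Next I would use (\ref{eq:b2}). Because $\allone\in B$, there is a nonzero $u_0\in\FF_2^6$ with $u_0G=\allone$, i.e.\ $\la u_0,g_j\ra=1$ for every $j$. Hence all $32$ columns lie on the affine hyperplane $H=\{x\in\FF_2^6:\la u_0,x\ra=1\}$. This is the crux of the argument, and it rests on the numerical coincidence $2^{6-1}=32=n$: the hyperplane $H$ has exactly $|H|=2^5=32$ points, and the columns are $32$ \emph{distinct} elements of $H$, so the column set is precisely all of $H$.

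Finally I would identify the code. Applying an invertible row transformation $S\in GL(6,\FF_2)$ (which does not change $B$) I may assume $u_0=(1,0,\dots,0)$, so that $H=\{(1,v):v\in\FF_2^5\}$; up to a permutation of the coordinates, $G$ then has first row $\allone$ and its remaining five rows run over all coordinate functions on $\FF_2^5$, which is exactly the generator matrix (\ref{eq:RM}) of $RM(1,5)$. Therefore $B\cong RM(1,5)$. I expect the only point demanding a little care to be this last identification, namely that "column set $=$ an affine hyperplane" really pins the code down up to equivalence ($GL(6,\FF_2)$ acts transitively on nonzero functionals, and row operations preserve the code), though this is routine. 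I also note that (\ref{eq:b1}) is never used: double-evenness is automatic, since $RM(1,5)$ is doubly even. As an independent check one could instead verify that the Pless power-moment identities, valid here because $d(B^\perp)\ge 3$, together with (\ref{eq:b1}) and the complementation symmetry from (\ref{eq:b2}), already force the weight enumerator of $B$ to equal $1+62y^{16}+y^{32}$; but the column argument above is more direct and yields equivalence with $RM(1,5)$ at once.
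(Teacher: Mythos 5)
Your proposal is correct, and it proves the lemma by a genuinely different route than the paper. The paper works with codewords: it first pins down the weight enumerator of $B$ as $1+62y^{16}+y^{32}$ via the MacWilliams identity, using (\ref{eq:b1}) and (\ref{eq:b2}) to parametrize the enumerator by three unknowns $a,b,c$ and (\ref{eq:b3}) to force the $y^2$-coefficient $9a+4b+c$ of the dual enumerator to vanish, and then reconstructs a generator matrix row by row, solving a small linear system at each stage to show that $r_4,r_5,r_6$ are forced up to column permutation. Your argument is dual in spirit: it works with the columns of a generator matrix, using (\ref{eq:b3}) only to conclude the $32$ columns are distinct and nonzero, and (\ref{eq:b2}) to place them all in the affine hyperplane $\{x:\la u_0,x\ra=1\}$, which has exactly $2^5=32$ points; the columns therefore exhaust the hyperplane, and normalizing $u_0$ by a row transformation identifies the code as $RM(1,5)$. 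Your route is shorter, avoids the MacWilliams computation entirely, and yields a slightly stronger statement: condition (\ref{eq:b1}) is never used, and only $d(B^\perp)\ge 3$ rather than $\ge 4$ is needed. The trade-off is that it hinges on the numerical coincidence $n=2^{\dim(B)-1}$, so it is special to this length and dimension; the paper's enumerator-plus-reconstruction technique (following \cite{HLM}) also works at length $24$ and dimension $6$, where $24<2^5$ and the columns of a generator matrix cannot fill out a hyperplane, so no such counting argument is available there.
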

\begin{proof}
Let $B_{32}$ be a binary $[32,6]$ code
satisfying (\ref{eq:b1})--(\ref{eq:b3}).
From (\ref{eq:b1}) and (\ref{eq:b2}),
the weight enumerator of $B_{32}$ is written as:
\[
1 + ay^4 
+ by^8 
+ cy^{12} 
+ (62 -2a - 2b - 2c)y^{16}
+ cy^{20} 
+ by^{24} 
+ ay^{28} 
+ y^{32},
\]
where $a,b$ and $c$ are nonnegative integers.
By the MacWilliams identity, 
the weight enumerator of $B_{32}^\perp$
is given by:
\[
1 + (9a + 4b + c)y^2 + (294a + 24b - 10c + 1240)y^4  + \cdots.
\]
From (\ref{eq:b3}), $9a + 4b + c=0$.
This gives $a=b=c=0$, since all $a,b$ and $c$ are nonnegative.
Hence, the weight enumerator of $B_{32}$ is uniquely determined
as (\ref{eq:WE}).

Let $G$ be a generator matrix of $B_{32}$ and let
$r_i$ be the $i$th row of $G$ $(i=1,2,\ldots,6)$.
From the weight enumerator (\ref{eq:WE}), 
we may assume without loss of generality that
the first three rows of $G$ are as follows:
\[
\begin{array}{cccccc}
r_1 &=&(1111 1111& 1111 1111& 1111 1111& 1111 1111),\\
r_2 &=&(1111 1111& 1111 1111& 0000 0000& 0000 0000),\\
r_3 &=&(1111 1111& 0000 0000& 1111 1111& 0000 0000).
\end{array}
\]

Put $r_4=(v_1,v_2,v_3,v_4)$,
where $v_i$ $(i=1,2,3,4)$ are vectors of length $8$
and let $n_i$ denote the number of $1$'s in $v_i$.
Since the binary code $B_4$ 
generated by the four rows $r_1,r_2,r_3,r_4$ has
weight enumerator $1+14y^{16}+y^{32}$, we have
the following system of the equations:
\begin{align*}
\wt(r_4)&= n_1+n_2+n_3+n_4 = 16,\\
\wt(r_2+r_4)&= (8-n_1)+(8-n_2)+n_3+n_4 = 16,\\
\wt(r_3+r_4)&= (8-n_1)+n_2+(8-n_3)+n_4 = 16,\\
\wt(r_2+r_3+r_4)&=n_1+(8-n_2)+(8-n_3)+n_4 = 16.
\end{align*}
This system of the equations has a unique solution
$n_1=n_2=n_3=n_4=4$.
Hence, we may assume without loss of generality that
\[
\begin{array}{cccccc}
r_4 &=& (1111 0000& 1111 0000& 1111 0000& 1111 0000).
\end{array}
\]

Similarly, put $r_5=(v_1,v_2,\ldots,v_8)$,
where $v_i$ $(i=1,\ldots,8)$ are vectors of length $4$ 
and let $n_i$ denote the number of $1$'s in $v_i$.
Since the binary code $B_5=\langle B_4,r_5 \rangle$ has
weight enumerator $1+30y^{16}+y^{32}$,
we have the following system of the equations:
\[
\sum_{a \in \Gamma_t} n_a 
+ \sum_{b \in \{1,\ldots,8\}\setminus\Gamma_t} (4-n_b) = 16
\quad (t=1,\ldots,8),
\]
where $\Gamma_t$ $(t=1,\ldots,8)$ are
$\{1,\ldots,8\}$, 
$\{5, 6, 7, 8 \}$,
$\{3, 4, 7, 8 \}$,
$\{2, 4, 6, 8 \}$,
$\{1, 2, 7, 8 \}$,
$\{1, 3, 6, 8 \}$,
$\{1, 4, 5, 8 \}$ and 
$\{2, 3, 5, 8 \}$.
This system of the equations has a unique solution
$n_i=2\ (i=1,\ldots,8)$.
Hence, we may assume without loss of generality that
\[
\begin{array}{cccccc}
r_5 &=& (1100 1100& 1100 1100& 1100 1100& 1100 1100).
\end{array}
\]

Finally, put $r_6=(v_1,v_2,\ldots,v_{16})$, 
where $v_i$ $(i=1,\ldots,16)$ are vectors of length $2$ 
and let $n_i$ denote the number of $1$'s in $v_i$.
Similarly, since the binary code $\langle B_5,r_6 \rangle$ has
weight enumerator (\ref{eq:WE}),
we have
$n_i=1\ (i=1,\ldots,16)$.
Hence, we may assume without loss of generality that
\[
\begin{array}{cccccc}
r_6 &=& (1010 1010& 1010 1010& 1010 1010& 1010 1010).
\end{array}
\]
Therefore, 
a generator matrix $G$ is uniquely determined
up to permutation of columns.
\end{proof}

Using a classification method similar to that described 
in~\cite[Section 4.3]{HLM},
we verified that all Type~II $\ZZ_4$-codes
with residue codes $RM(1,5)$ are equivalent.
Therefore, we have the following:

\begin{prop}\label{Prop:32dim6}
Up to equivalence, there is a unique
extremal Type~II $\ZZ_4$-code of length $32$
with residue code of dimension $6$.
\end{prop}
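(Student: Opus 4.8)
The plan is to reduce the statement to a finite classification over a single fixed residue code. By the first lemma of Section~\ref{Sec:2}, the residue code $B$ of any extremal Type~II $\ZZ_4$-code of length $32$ satisfies the conditions (\ref{eq:b1})--(\ref{eq:b3}), and by Lemma~\ref{Lem:bound} its dimension is at least $6$. If that dimension equals $6$, then Lemma~\ref{Lem:RM} forces $B \cong RM(1,5)$. Since equivalent $\ZZ_4$-codes have equivalent residue codes, I may fix the residue code to be exactly $RM(1,5)$ with the generator matrix (\ref{eq:RM}), and it then suffices to classify, up to equivalence, the Type~II $\ZZ_4$-codes $C$ with $C^{(1)}=RM(1,5)$ and to locate the extremal ones among them.

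First I would enumerate all such codes using the construction of Section~\ref{SS:method}. Because $RM(1,5)\ni\allone$ and every Type~II $\ZZ_4$-code is equivalent to one containing $\allone$, I may restrict to the lifts containing $\allone$; with $k=6$ there are exactly $2^{(k-1)(k-2)/2}=2^{10}=1024$ of them, each determined by an admissible $(1,0)$-matrix $B$ whose first row is the zero vector. This yields an explicit, finite list of $1024$ Type~II $\ZZ_4$-codes to be sorted into equivalence classes. Next I would organize the equivalence test by exploiting the symmetry of the fixed residue code. Any monomial transformation (a coordinate permutation together with sign changes) that carries one code on the list to another must, after reduction modulo $2$, fix $RM(1,5)$ setwise; hence its permutation part lies in $\Aut(RM(1,5))=AGL(5,2)$. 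The relevant group is therefore generated by $AGL(5,2)$ acting on coordinates together with the sign changes, and the equivalence classes among the $1024$ codes are precisely its orbits. I would compute these orbits in {\sc Magma}~\cite{Magma}, following the method of~\cite[Section~4.3]{HLM}, and verify that there is a single orbit, that is, that all Type~II $\ZZ_4$-codes with residue code $RM(1,5)$ are equivalent.

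Granting this, the proposition is immediate: the code $C_{5,1}$ of~\cite{Z4-BSBM} is an extremal Type~II $\ZZ_4$-code with residue code $RM(1,5)$, so it lies in the unique orbit; since extremality is preserved under equivalence, the single code with residue code of dimension $6$ is extremal and is unique up to equivalence.

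The main obstacle is the equivalence computation in the second paragraph, and specifically the correct bookkeeping for the sign changes: these act trivially on the residue code but nontrivially on the $\ZZ_4$-lift, so they must be included in the acting group even though they are invisible modulo $2$. Ensuring that the group used is the full monomial stabilizer of $RM(1,5)$ (so that no two inequivalent codes are wrongly merged and no genuine equivalence is missed) and that the $1024$-element list is complete is the delicate part; the size of $AGL(5,2)$ makes a naive pairwise search infeasible, so the computation must be structured around this group action rather than brute force.
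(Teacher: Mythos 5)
Your proposal is correct and takes essentially the same route as the paper: after using Lemma~\ref{Lem:RM} to force the residue code to be $RM(1,5)$, the paper likewise performs a computer classification (following~\cite[Section 4.3]{HLM}) showing that all Type~II $\ZZ_4$-codes with residue code $RM(1,5)$ are equivalent, and then concludes from the existence of the extremal code $C_{5,1}$ of~\cite{Z4-BSBM}. Your explicit bookkeeping---the $2^{10}$ lifts containing $\allone$ from Section~\ref{SS:method} and orbit computation under monomial transformations whose permutation parts lie in $\Aut(RM(1,5))$---is precisely the content of that cited classification method.
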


By Proposition~\ref{Prop:32dim16} and Lemma~\ref{Lem:RM},
all binary $[32,k]$ codes satisfying (\ref{eq:b1})--(\ref{eq:b3}) 
can be realized as the residue codes of some 
extremal Type~II $\ZZ_4$-codes for $k=6$ and $16$.
The binary $[32,7]$ code $N_{32}=\langle RM(1,5),v\rangle$
satisfies (\ref{eq:b1})--(\ref{eq:b3}),
where $RM(1,5)$ is defined by (\ref{eq:RM}) and
\[
\supp(v)=\{ 1, 2, 3, 4, 5, 9, 17, 29\}.
\]
However, we verified that
none of the Type~II $\ZZ_4$-codes $C$ with 
$C^{(1)}=N_{32}$ is extremal,
using the method in Section~\ref{SS:method}.
Therefore, there is a binary code
satisfying (\ref{eq:b1})--(\ref{eq:b3}) 
which cannot be realized as the residue code of an 
extremal Type~II $\ZZ_4$-code of length $32$.

\section{Extremal Type~II $\ZZ_4$-codes of length 40}

\subsection{Determination of dimensions of residue codes}
Currently,
$23$ inequivalent
extremal Type~II $\ZZ_4$-codes of length $40$ are 
known~\cite{CS97,Z4-GaH, Z4-H, Z4-PSQ}.
Among these $23$ known codes,
the $22$ codes have residue codes which are 
doubly even self-dual codes
and the other code is given in~\cite{Z4-PSQ}.
Using an approach similar to that used in the previous
section, we determine the dimensions of the residue codes
of extremal Type~II $\ZZ_4$-codes of length $40$.

By Lemma~\ref{Lem:bound}, 
if $C$ is an extremal Type~II $\ZZ_4$-code of length 
$40$, then $7 \le \dim(C^{(1)}) \le 20$. 
Using the method given in Section~\ref{SS:method},
we explicitly found an extremal Type~II $\ZZ_4$-code 
from some binary doubly even $[40,7,16]$ code.
This binary code was found as a subcode of some
binary doubly even self-dual code.
We denote the extremal Type~II $\ZZ_4$-code by $C_{40,7}$.
The weight enumerators of 
$C_{40,7}^{(1)}$ and ${C_{40,7}^{(1)}}^\perp$ are given by:
\begin{align*}
&
1 +  15 y^{16}+ 96y^{20}+  15y^{24}+y^{40}, \\
&
1 + 1510 y^{4} + 59520 y^{6} + 1203885 y^{8} + 13235584 y^{10} 
+ 87323080 y^{12} 
\\ &
+ 362540160 y^{14} 
+ 982189650 y^{16} 
+ 1771386240 y^{18} 
+ 2154055332 y^{20}
\\ &
+ \cdots + y^{40},
\end{align*}
respectively.
For the code $C_{40,7}$,
we give a generator matrix of the form (\ref{Eq:Z4-G}),
by only listing the $7 \times 40$ matrix $G_{40}$
which has form $(\ A \ \ \tilde{I_7}+2B\ )$ in (\ref{Eq:Z4-G}):
\[
G_{40}=
\left(\begin{array}{cc}
111111111111111111111111111111111& 1111111\\
101101001011110000011001100000101& 0100000\\
100000101011011000100010001111011& 2210000\\
100110011011001101111111101000100& 0203000\\
011110110111111001011010010001010& 0002300\\
110100101111000011100110000010100& 0202010\\
010111101001111110010110110100010& 0002003
\end{array}\right).
\]
Note that the lower part in (\ref{Eq:Z4-G})
can be obtained from $G_{40}$.

Using the generator matrix $G_{40} \bmod 2$ of 
the binary code $C_{40,7}^{(1)}$,
we establish the existence of some extremal Type~II 
$\ZZ_4$-codes, by Lemma~\ref{Lem:wt4}, as follows.
For $i=8,9\ldots,19$, 
we define $B_{40,i}$ to be the binary code
$\langle B_{40,i-1}, w_i \rangle$,
where $B_{40,7}=C_{40,7}^{(1)}$ and $\supp(w_i)$ 
is listed in Table~\ref{Tab:40}.
The weight distributions of $B_{40,i}$ $(i=8,9,\ldots,19)$
are also listed in the table, where
$A_j$ denotes the number of codewords of weight $j$
($j=4,8,12,16,20$).
From the weight distributions,
one can easily verify that $w_i \not\in B_{40,i-1}$ and 
$B_{40,i}$ is doubly even for $i=8,9,\ldots,19$.
There are extremal Type~II $\ZZ_4$-codes with residue codes of dimension $20$.
By Lemma~\ref{Lem:wt4}, 
we have the following:

\begin{prop}\label{Prop:40}
There is an extremal Type~II $\ZZ_4$-code of length 
$40$ whose residue code has dimension $k$
if and only if $k \in \{7,8,\ldots,20\}$.
\end{prop}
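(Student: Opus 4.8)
The plan is to prove the two implications separately. The "only if" direction is immediate from Lemma~\ref{Lem:bound}: every extremal Type~II $\ZZ_4$-code of length $40$ has residue dimension between $7$ and $20$, so no value of $k$ outside $\{7,8,\ldots,20\}$ can occur. All the work lies in the "if" direction, where for each $k$ in this range I must exhibit an extremal Type~II $\ZZ_4$-code whose residue code has dimension exactly $k$.

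The strategy follows the proof of Proposition~\ref{Prop:1} at length $32$: realize the two extreme dimensions $k=7$ and $k=20$ directly, and then fill in every intermediate value by one dimension-raising device, namely Lemma~\ref{Lem:wt4}. That lemma, given an extremal code with residue code $B$ together with a weight-$4$ vector $w\notin B$ for which $\la B,w\ra$ is doubly even, produces an extremal code with residue code $\la B,w\ra$, of one larger dimension. Starting from an explicit extremal code $C_{40,7}$ with $\dim(C_{40,7}^{(1)})=7$, I set $B_{40,7}=C_{40,7}^{(1)}$ and build the chain $B_{40,i}=\la B_{40,i-1},w_i\ra$ for $i=8,9,\ldots,19$, each $w_i$ of weight $4$. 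Applying Lemma~\ref{Lem:wt4} at each step yields an extremal code with residue code $B_{40,i}$, so the dimensions $k=8,\ldots,19$ are covered inductively.

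To check that each $w_i$ is admissible, I would tabulate $\supp(w_i)$ and the weight distribution of $B_{40,i}$ (Table~\ref{Tab:40}): the increase in the number of codewords shows $w_i\notin B_{40,i-1}$, while $A_j=0$ for $j\not\equiv 0\pmod 4$ shows that $B_{40,i}$ is doubly even. The top value $k=20=n/2$ requires no new construction, since among the known extremal Type~II $\ZZ_4$-codes of length $40$ there are ones whose residue codes are doubly even self-dual, hence of dimension $20$.

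The genuine obstacle is the base case $k=7$, the smallest dimension allowed by Lemma~\ref{Lem:bound}. It cannot be reached by a weight-$4$ extension from anything smaller, so the code $C_{40,7}$ must be produced from scratch. Its residue code must satisfy (\ref{eq:b1})--(\ref{eq:b3}), which at dimension $7$ is very restrictive, and then the construction of Section~\ref{SS:method} furnishes a family of $\ZZ_4$-lifts among which one must actually find an extremal member. I would first locate a suitable doubly even $[40,7,16]$ code, most naturally as a subcode of a binary doubly even self-dual code, and then search the associated lifts for an extremal one. This computer search is where the difficulty concentrates and where existence is not guaranteed a priori; once $C_{40,7}$ is in hand, the rest of the argument is routine.
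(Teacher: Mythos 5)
Your proposal is correct and follows essentially the same route as the paper: the ``only if'' direction from Lemma~\ref{Lem:bound}, the base case $k=7$ via an explicit extremal code $C_{40,7}$ built by the method of Section~\ref{SS:method} from a doubly even $[40,7,16]$ subcode of a doubly even self-dual code, the intermediate dimensions $k=8,\ldots,19$ by iterating Lemma~\ref{Lem:wt4} along the chain $B_{40,i}=\la B_{40,i-1},w_i\ra$ checked through the weight distributions of Table~\ref{Tab:40}, and $k=20$ from the known codes with doubly even self-dual residue codes. No gaps; this matches the paper's argument in both structure and detail.
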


\begin{table}[thb]
\centering
\caption{Supports $\supp(w_i)$ and weight distributions of $B_{40,i}$}
\vspace*{0.2in}
\label{Tab:40}
{\small
\begin{tabular}{c|l|rrrrr}
\noalign{\hrule height1pt}
$i$ & \multicolumn{1}{c|}{$\supp(w_i)$} 
&$A_4$ &$A_8$ & $A_{12}$& $A_{16}$ & $A_{20}$  \\
\hline
 8&$\{1,2, 4,29\}$&   1&    0&     1&     35&    180 \\
 9&$\{1,2, 5,33\}$&   3&    0&     3&     75&    348 \\
10&$\{1,2, 7,31\}$&   6&    1&    10&    150&    688 \\
11&$\{1,2, 9,10\}$&  10&    6&    22&    313&   1344 \\
12&$\{1,2,11,17\}$&  15&   21&    48&    634&   2658 \\
13&$\{1,2,12,39\}$&  22&   56&   102&   1271&   5288 \\
14&$\{1,2,13,27\}$&  29&   99&   280&   2620&  10326 \\
15&$\{1,2,14,37\}$&  37&  175&   688&   5296&  20374 \\
16&$\{1,2,15,35\}$&  47&  313&  1548&  10694&  40330 \\
17&$\{1,2,20,36\}$&  57&  509&  3436&  21698&  79670 \\
18&$\{1,2,21,28\}$&  68&  845&  7344&  43826& 157976 \\
19&$\{1,2,24,32\}$&  84& 1533& 15184&  87938& 314808 \\
\noalign{\hrule height1pt}
\end{tabular}
}
\end{table}

As another approach to Proposition~\ref{Prop:40},
we explicitly found an extremal Type~II $\ZZ_4$-code
$C_{40,i}$ with $C_{40,i}^{(1)} \cong B_{40,i}$
for $i=8,9,\ldots,19$.
To save space, we only list
in Figure~\ref{Fig:40} the $i \times (40-i)$ matrices
$A$ in generator matrices of the form (\ref{Eq:SF}).

\begin{rem}
Similar to Remark \ref{rem:32},
any of the codes $C_{40,i}$ $(i=7,8,\ldots,19)$
has minimum Hamming weight $4$ and
minimum Lee weight $8$.
\end{rem}

\subsection{Residue codes of dimension 7}

At lengths $24$ and $32$,
the smallest dimensions among binary codes
satisfying (\ref{eq:b1})--(\ref{eq:b3}) are both $6$,
and there is a unique extremal Type~II $\ZZ_4$-code
with residue code of dimension $6$,
up to equivalence, for both lengths
(see~\cite{HLM} and Proposition~\ref{Prop:32dim6}).

At length $40$,
we found an extremal Type~II $\ZZ_4$-code $C'_{40,7}$
with residue code $C'^{(1)}_{40,7}=
\langle C^{(1)}_{40,7} \cap \langle v \rangle^\perp, v \rangle$,
where
\[
\supp(v)=
\{ 1, 3, 4, 6, 8, 9, 10, 11, 12, 13, 18, 20 \}.
\]
The weight enumerators of 
$C'^{(1)}_{40,7}$ and ${C'^{(1)}_{40,7}}^\perp$ are
given by:
\begin{align*}
& 1+ y^{12}+  11 y^{16}
+ 102 y^{20}+  11 y^{24}+ y^{28}+ y^{40},
\\ &
1
+      1542 y^{4}
+     59264 y^{6}
+   1204653 y^{8}
+  13234816 y^{10}
+  87321928 y^{12}
\\ &
+ 362544000 y^{14}
+ 982186834 y^{16}
+1771383424 y^{18}
+2154061668 y^{20}
\\ &
+ \cdots + y^{40},
\end{align*}
respectively.
In order to give a generator matrix of $C'_{40,7}$
of the form (\ref{Eq:SF}), we only list
the $7 \times 33$ matrix $A$ in (\ref{Eq:SF}):
\[
A=
\left(\begin{array}{c}
 100000000000001011111111111030232\\
 011011011101000001011000101230302\\
 011100001110011110001110100311332\\
 100000111111113101101010010201033\\
 010110010110101100111101000312111\\
 010001111010000010000001011311013\\
 111111111111111000000000000020200
\end{array}\right).
\]
Hence, at length $40$, 
there are at least two inequivalent
extremal Type~II $\ZZ_4$-codes whose residue
codes have the smallest dimension
among binary codes satisfying (\ref{eq:b1})--(\ref{eq:b3}). 

Among these $23$ known codes,
the $22$ codes have residue codes which are 
doubly even self-dual codes
and the residue code of the other code given in~\cite{Z4-PSQ}
has dimension $13$ and
the following weight enumerator:
\[
1
+  156 y^{12}
+ 1911 y^{16}
+ 4056 y^{20}
+ 1911 y^{24}
+  156 y^{28}
+ y^{40}.
\]
It turns out that the code in~\cite{Z4-PSQ} and $C_{40,13}$
are inequivalent. 
Hence, none of 
the codes $C_{40,i}$ $(i=7,8,\ldots,19)$
and $C'_{40,7}$ is equivalent to any of the
known codes.
Thus, we have the following:

\begin{cor}
There are at least $37$ inequivalent
extremal Type~II $\ZZ_4$-codes of length $40$.
\end{cor}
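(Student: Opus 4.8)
The plan is to exploit the single structural invariant emphasized just before the statement: equivalent $\ZZ_4$-codes have equivalent residue codes. Consequently any invariant of the equivalence class of the residue code---its dimension, and when dimensions coincide its weight enumerator---is an invariant of the $\ZZ_4$-code itself, and it suffices to exhibit $37$ extremal Type~II $\ZZ_4$-codes whose residue codes are pairwise distinguished by these two invariants.

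First I would assemble the codes constructed in this section. Proposition~\ref{Prop:40} together with the explicit matrices of Figure~\ref{Fig:40} supplies codes $C_{40,i}$ with $C_{40,i}^{(1)}\cong B_{40,i}$ for $i=7,8,\ldots,19$, where $\dim(C_{40,i}^{(1)})=i$; these $13$ codes therefore have residue codes of $13$ distinct dimensions and so are pairwise inequivalent. To these I would adjoin the code $C'_{40,7}$ of the preceding discussion, whose residue code also has dimension $7$. Since the weight enumerators of $C_{40,7}^{(1)}$ and $C'^{(1)}_{40,7}$ recorded above differ (the latter has codewords of weights $12$ and $28$, the former none), $C'_{40,7}$ is inequivalent to $C_{40,7}$, while it is inequivalent to every $C_{40,i}$ with $i\ne 7$ on dimensional grounds. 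This yields $14$ pairwise inequivalent new codes.

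Next I would separate these $14$ codes from the $23$ previously known extremal Type~II $\ZZ_4$-codes of length $40$. Of the known codes, $22$ have doubly even self-dual residue codes, hence residue codes of dimension $20$; since every new code has residue dimension at most $19$, none of the $14$ can be equivalent to any of these $22$. The one remaining known code, from~\cite{Z4-PSQ}, has residue code of dimension $13$, so the only new code it could match is $C_{40,13}$; but the known code's residue weight enumerator has minimum weight $12$, whereas $B_{40,13}$ has $A_4=22\ne 0$ by Table~\ref{Tab:40}, so the two residue codes are inequivalent and hence so are the two $\ZZ_4$-codes. Therefore all $14$ new codes are inequivalent both to one another and to all $23$ known codes, giving at least $23+14=37$ inequivalent extremal Type~II $\ZZ_4$-codes.

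The substantive work---constructing the codes $C_{40,i}$ and $C'_{40,7}$ and verifying their extremality---has already been carried out in this section, so the remaining argument is essentially bookkeeping with invariants. The only place where dimension alone does not suffice is the two pairs with coinciding residue dimension ($7$ and $13$), and there the mild obstacle is confirming that the relevant residue weight enumerators genuinely differ; this is immediate from the data already tabulated, since in each colliding pair the two residue codes have different minimum weights.
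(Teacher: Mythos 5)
Your proposal is correct and follows essentially the same route as the paper: the paper likewise counts the $23$ known codes plus the $14$ new ones ($C_{40,i}$, $i=7,\ldots,19$, and $C'_{40,7}$), distinguishing them via the invariance of residue codes under equivalence, using residue dimension where dimensions differ and residue weight enumerators for the two colliding pairs (dimension $7$: $C_{40,7}$ vs.\ $C'_{40,7}$; dimension $13$: $C_{40,13}$ vs.\ the code of~\cite{Z4-PSQ}). Your write-up merely makes explicit the weight-enumerator comparisons that the paper leaves as brief assertions.
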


The binary $[40,8]$ code $N_{40}=\langle C_{40,7}^{(1)},w \rangle$
satisfies (\ref{eq:b1})--(\ref{eq:b3}),
where 
\[
\supp(w)=\{4, 8, 13, 22, 23, 34, 36, 39\}.
\]
However, we verified that
none of the Type~II $\ZZ_4$-codes $C$ with 
$C^{(1)}=N_{40}$ is extremal,
using the method in Section~\ref{SS:method}.
Therefore, there is a binary code
satisfying (\ref{eq:b1})--(\ref{eq:b3}) 
which cannot be realized as the residue code of an 
extremal Type~II $\ZZ_4$-code of length $40$. 
It is not known whether there is a binary $[40,7]$ code $B$
satisfying (\ref{eq:b1})--(\ref{eq:b3})
such that none of the Type~II $\ZZ_4$-codes $C$ with 
$C^{(1)}=B$ is extremal.


\begin{figure}[p]
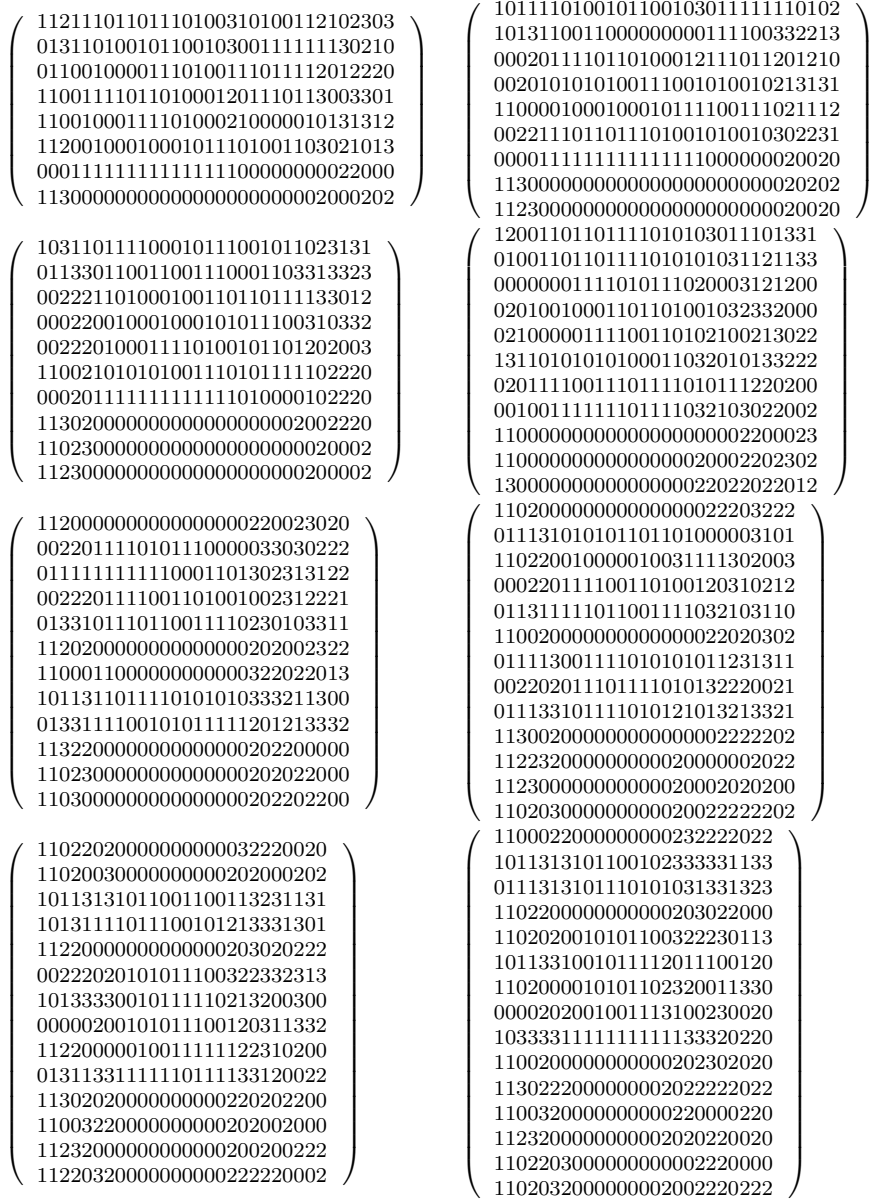

\centering
{\scriptsize
\begin{tabular}{ll}
$
\left(\begin{array}{c}
11211101101110100310100112102303\\
01311010010110010300111111130210\\
01100100001110100111011112012220\\
11001111011010001201110113003301\\
11001000111101000210000010131312\\
11200100010001011101001103021013\\
00011111111111111100000000022000\\
11300000000000000000000002000202
\end{array}\right)
$&$
\left(\begin{array}{c}
1011110100101100103011111110102\\
1013110011000000000111100332213\\
0002011110110100012111011201210\\
0020101010100111001010010213131\\
1100001000100010111100111021112\\
0022111011011101001010010302231\\
0000111111111111111000000020020\\
1130000000000000000000000020202\\
1123000000000000000000000020020
\end{array}\right)
$\\$
\left(\begin{array}{c}
103110111100010111001011023131\\
011330110011001110001103313323\\
002221101000100110110111133012\\
000220010001000101011100310332\\
002220100011110100101101202003\\
110021010101001110101111102220\\
000201111111111111010000102220\\
113020000000000000000002002220\\
110230000000000000000000020002\\
112300000000000000000000200002
\end{array}\right)
$&$
\left(\begin{array}{c}
12001101101111010103011101331\\
01001101101111010101031121133\\
00000001111010111020003121200\\
02010010001101101001032332000\\
02100000111100110102100213022\\
13110101010100011032010133222\\
02011110011101111010111220200\\
00100111111101111032103022002\\
11000000000000000000002200023\\
11000000000000000020002202302\\
13000000000000000022022022012
\end{array}\right)
$\\$
\left(\begin{array}{c}
1120000000000000000220023020\\
0022011110101110000033030222\\
0111111111110001101302313122\\
0022201111001101001002312221\\
0133101110110011110230103311\\
1120200000000000000202002322\\
1100011000000000000322022013\\
1011311011110101010333211300\\
0133111100101011111201213332\\
1132200000000000000202200000\\
1102300000000000000202022000\\
1103000000000000000202202200
\end{array}\right)
$&$
\left(\begin{array}{c}
110200000000000000022203222\\
011131010101101101000003101\\
110220010000010031111302003\\
000220111100110100120310212\\
011311111011001111032103110\\
110020000000000000022020302\\
011113001111010101011231311\\
002202011101111010132220021\\
011133101111010121013213321\\
113002000000000000002222202\\
112232000000000020000002022\\
112300000000000020002020200\\
110203000000000020022222202
\end{array}\right)
$\\$
\left(\begin{array}{c}
11022020000000000032220020\\
11020030000000000202000202\\
10113131011001100113231131\\
10131111011100101213331301\\
11220000000000000203020222\\
00222020101011100322332313\\
10133330010111110213200300\\
00000200101011100120311332\\
11220000010011111122310200\\
01311331111110111133120022\\
11302020000000000220202200\\
11003220000000000202002000\\
11232000000000000200200222\\
11220320000000000222220002
\end{array}\right)
$&$
\left(\begin{array}{c}
1100022000000000232222022\\
1011313101100102333331133\\
0111313101110101031331323\\
1102200000000000203022000\\
1102020010101100322230113\\
1011331001011112011100120\\
1102000010101102320011330\\
0000202001001113100230020\\
1033331111111111133320220\\
1100200000000000202302020\\
1130222000000002022222022\\
1100320000000000220000220\\
1123200000000002020220020\\
1102203000000000002220000\\
1102032000000002002220222
\end{array}\right)
$
\end{tabular}
\caption{
Matrices $A$ in generator matrices of $C_{40,i}$}
\label{Fig:40}
}
\end{figure}

\begin{figure}[thb]
\centering
{\scriptsize
\begin{tabular}{ll}
$
\left(\begin{array}{c}
112020000002002220302220\\
110020200003002022020020\\
000200000000000333020002\\
112002000000002222232002\\
011133301113001231111331\\
101113111103102220313102\\
101331111101100220313212\\
000022200100111132200203\\
112002001010112231002313\\
103333111113113310331200\\
112222000002002220003002\\
113002000002000202000002\\
112232000000000022020020\\
112302000002000222220200\\
112220300000000000222002\\
110003200002000000200022
\end{array}\right)
$
&
$
\left(\begin{array}{c}
11202002200000220232020\\
00000222200020033302020\\
11220000000020220003202\\
10313133101110321311131\\
10111331311100200013332\\
10313311311120220031321\\
11220020200121111222220\\
00222220001011023300011\\
10311311311131113031302\\
11200022000020022020322\\
11322002000000202200020\\
11023020200000200002220\\
11030200200000220220200\\
11022003200020200202000\\
11200220300000220002200\\
11220032000020202022002\\
11202322000020020022200
\end{array}\right)
$
\\
$
\left(\begin{array}{c}
1100200220022202222302\\
0010210300202202002222\\
1300220020022220001202\\
0111231033331313331313\\
1310301330020222022220\\
1310100312010000000002\\
0211001113232220002022\\
1311011200232020002200\\
1300000000220222212202\\
1211031013133113131113\\
1300020222022012200020\\
1300200202220201202200\\
1301130320322020202000\\
1300200222222222000201\\
1100220022203020002022\\
1300000222022002022012\\
1100020020222302022002\\
1100000002202222320000
\end{array}\right)
$
&
$
\left(\begin{array}{c}
013002020002021202002\\
002330022000030000022\\
011200202202002320000\\
113030002100030202302\\
113212220120232221222\\
003133313011121330213\\
113230022222012003302\\
013222220200222212022\\
011222222202000222030\\
013002202012222022000\\
101233113131113113113\\
013200200202102002002\\
013002002222000002201\\
013220022201020220020\\
013202012020220002022\\
011022023222020002002\\
013220120202022202022\\
102312200022002022222\\
011003020220022222002
\end{array}\right)
$
\end{tabular}
\setcounter{figure}{1}
\caption{
Matrices $A$ in generator matrices of $C_{40,i}$ (continued)}
}
\end{figure}

\bigskip
\noindent
{\bf Acknowledgment.} 
The author would like to thank Akihiro Munemasa
for his help in 
the classification given in Proposition~\ref{Prop:32dim6}.


\end{document}